\numberwithin{equation}{section}
\newtheorem{thm}[equation]{Theorem}
\newtheorem{lemma}[equation]{Lemma}
\newtheorem{prop}[equation]{Proposition}
\newtheorem{remark}[equation]{Remark}
\newcommand{\authorfootnotes}{\renewcommand\thefootnote{\@fnsymbol\c@footnote}}%
\begin{document}

\def\q{\mathfrak{q}}
\def\p{\mathfrak{p}}
\def\l{\mathfrak{l}}
\def\u{\mathfrak{u}}
\def\a{\mathfrak{a}}
\def\b{\mathfrak{b}}
\def\m{\mathfrak{m}}
\def\n{\mathfrak{n}}
\def\c{\mathfrak{c}}
\def\d{\mathfrak{d}}
\def\e{\mathfrak{e}}
\def\k{\mathfrak{k}}
\def\z{\mathfrak{z}}
\def\h{{\mathfrak h}}
\def\gl{\mathfrak{gl}}
\def\sl{\mathfrak{sl}}
\def\t{\mathfrak{t}}

\def\Ext{{\rm Ext}}
\def\Hom{{\rm Hom}}
\def\Ind{{\rm Ind}}

\def\res{\mathop{Res}}

\def\GL{{\rm GL}}
\def\SL{{\rm SL}}
\def\SO{{\rm SO}}
\def\O{{\rm O}}

\def\R{\mathbb{R}}
\def\C{\mathbb{C}}
\def\Z{\mathbb{Z}}
\def\Q{\mathbb{Q}}
\def\A{\mathbb{A}}

\def\w{\wedge}

\def\Cat{\mathcal{C}}
\def\HC{{\rm HC}}
\def\HCat{\Cat^\HC}
\def\proj{{\rm proj}}

\def\to{\rightarrow}
\def\To{\longrightarrow}

\def\1{1\!\!1}
\def\dim{{\rm dim}}

\def\th{^{\rm th}}
\def\isom{\approx}

\def\CE{\mathcal{C}\mathcal{E}}
\def\E{\mathcal{E}}

\def\dis{\displaystyle}
\def\f{{\bf f}}                 
\def\g{{\bf g}}
\def\T{{\rm T}}              
\def\omegatil{\tilde{\omega}}  
\def\H{\mathcal{H}}            
\def\W{W^{\circ}}           
\def\Whit{\mathcal{W}}      
\def\ringO{\mathcal{O}}     
\def\S{\mathcal{S}}      
\def\M{\mathcal{M}}      
\def\K{{\rm K}}          
\def\h{\mathfrak{h}} 
\def\N{\mathfrak{N}}    
\def\norm{{\rm N}}       
\def\trace{{\rm Tr}} 
\def\ctilde{\tilde{C}}

\title{Non-vanishing of Rankin-Selberg Convolutions for Hilbert Modular Forms}

\author{Alia Hamieh}
\address[Alia Hamieh]{University of Northern British Columbia, Department of Mathematics and Statistics, Prince George, BC V2N 4Z9 Canada}
\email{alia.hamieh@unbc.ca}

\author{Naomi Tanabe}
\address[Naomi Tanabe]{Bowdoin College, Department of Mathematics, Brunswick, ME 04011 USA}
\email{ntanabe@bowdoin.edu}
\thanks{Research of first author was partially supported by PIMS Postdoctoral Fellowship at the University of Lethbridge}

\keywords{ Hilbert modular forms, Rankin-Selberg $L$-functions, non-vanishing of central $L$-values}

\subjclass[2010]{Primary 11F41, 11F67; secondary 11F30, 11F11, 11F12, 11N75}
\date{\today}

\begin{abstract}
In this paper, we study the non-vanishing of the central values of the Rankin-Selberg $L$-function of two
ad\`elic Hilbert primitive forms $\f$ and $\g$, both of which have varying weight parameter $k$. We prove that, for sufficiently large $k$, there are at least $\frac{k}{(\log k)^{c}}$ ad\`elic Hilbert primitive forms $\f$ of weight $k$ for which $L(\frac12, \f\otimes\g)$ are nonzero.
 \end{abstract}

\maketitle

\section{Introduction}

Chowla conjectured in \cite{chowla} that $L(\frac12,\chi)$ is non-zero for all  primitive quadratic characters $\chi$. Indeed, it is widely believed that such a statement should hold for all primitive Dirichlet characters.  Chowla's conjecture remains open to this day, but there have been significant advances towards its resolution. On this front, a breakthrough was achieved by Soundararajan who showed in \cite{sound} that a remarkably high proportion of quadratic Dirichlet $L$-functions do not vanish at the critical point $s=\frac12$.  Likewise, it is also believed that the central values of modular $L$-functions are non-vanishing unless there is a trivial (e.g. sign in the functional equation) or an arithmetical reason for these values to vanish. 
The purpose of this paper is to study the non-vanishing of the family of central values of the Rankin-Selberg $L$-functions associated with two ad\`elic Hilbert modular forms both of which have varying weight parameter $k$. More precisely, it is our aim to prove the following theorem. 
\begin{thm}\label{thm:main}
Let $F$ be a totally real number field such that  the Dedekind zeta function $\zeta_F$ has no Landau-Siegel zero.
Let $\g$ be an ad\`elic Hilbert modular form in $\Pi_k(\n)$, the (finite) set of all primitive forms of weight $k$ and level $\n$ over $F$. Then there exists an absolute constant $c>1$ such that
\[ \#\left\{\f\in\Pi_k(\ringO_{F})\,:\, L\left(\frac12,\f\otimes\g\right)\neq 0\right\}\gg \frac{k}{\log^c k},\qquad \text{as}\;\;k\to\infty.\]
\end{thm}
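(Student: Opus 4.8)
The plan is to run the first- and second-moment method (Cauchy--Schwarz, without a mollifier), feeding into it the harmonic weights that occur naturally in the Petersson trace formula for $\Pi_k(\ringO_F)$. For each $\f\in\Pi_k(\ringO_F)$ let $\omega_\f>0$ be the harmonic weight proportional to $\langle\f,\f\rangle^{-1}$ times the archimedean factor $\Gamma(k-1)^d/(4\pi)^{d(k-1)}$, where $d=[F:\Q]$, normalised so that the Petersson formula reads $\sum_{\f}\omega_\f\,\lambda_\f(\m)\lambda_\f(\n)=\delta_{\m=\n}+(\text{Kloosterman terms})$, with $\lambda_\f$ the Hecke eigenvalues. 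Since every $\f$ with $L(\frac12,\f\otimes\g)=0$ drops out of the linear average, Cauchy--Schwarz gives
\[
 \Big|\sum_{\f}\omega_\f\,L(\tfrac12,\f\otimes\g)\Big|^2\le\Big(\sum_{\f:\,L\neq0}\omega_\f\Big)\Big(\sum_{\f}\omega_\f\,|L(\tfrac12,\f\otimes\g)|^2\Big),
\]
so with $M_1$ and $M_2$ denoting the two harmonic moments on the right one obtains $\#\{\f:L\neq0\}\ge(\max_\f\omega_\f)^{-1}M_1^2/M_2$. It therefore suffices to prove $M_1\gg1$, $M_2\ll\log k$, and a uniform bound $\max_\f\omega_\f\ll(\log k)^{c-1}/k$.

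For each moment I would insert an approximate functional equation for the degree-four $L$-function $L(s,\f\otimes\g)$, whose analytic conductor is of size $k^{2d}$ and whose effective length is therefore $\asymp k^{d}$, expressing $L(\frac12,\f\otimes\g)$ as a rapidly convergent sum of $\lambda_\f(\m)\lambda_\g(\m)\norm(\m)^{-1/2}$ together with its dual. Averaging the linear moment over $\f$ by the Petersson formula, the diagonal $\m=\ringO_F$ contributes the main term $\asymp1$ (as $\lambda_\g(\ringO_F)=1$), while the Kloosterman contribution is negligible because the Bessel weight $J_{k-1}$ is exponentially small once its argument is $\ll k$; this yields $M_1\gg1$. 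For the second moment, opening $|L|^2$ and applying the Petersson formula leaves a diagonal sum $\sum_{\norm(\m)\le k^{d}}\lambda_\g(\m)^2/\norm(\m)$, which by the Rankin--Selberg identity $\sum_{\m}\lambda_\g(\m)^2\norm(\m)^{-s}=\zeta_F(s)L(s,\mathrm{sym}^2\g)/\zeta_F(2s)$ and the simple pole of $\zeta_F$ at $s=1$ is $\asymp L(1,\mathrm{sym}^2\g)\log k\ll\log k$, giving $M_2\ll\log k$; hence $\sum_{\f:\,L\neq0}\omega_\f\ge M_1^2/M_2\gg(\log k)^{-1}$.

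The passage from this harmonic count to the genuine count is where the hypothesis on $\zeta_F$ enters. Bounding $\max_\f\omega_\f$ amounts to a uniform lower bound on the Petersson norm, and one has $\langle\f,\f\rangle\asymp(\Gamma(k)^d/(4\pi)^{dk})\,\kappa_F\,L(1,\mathrm{sym}^2\f)$ with $\kappa_F=\mathrm{Res}_{s=1}\zeta_F(s)$, whence $\omega_\f\asymp(k^d\,L(1,\mathrm{sym}^2\f))^{-1}$. A lower bound $L(1,\mathrm{sym}^2\f)\gg(\log k)^{-1}$, uniform over $\f\in\Pi_k(\ringO_F)$, then gives $\max_\f\omega_\f\ll(\log k)/k$; establishing this lower bound over $F$ is precisely the point at which one invokes a standard zero-free region for, and the absence of a Landau--Siegel zero of, $\zeta_F$. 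Combining the three estimates yields $\#\{\f:L\neq0\}\gg k/(\log k)^{c}$ for a suitable absolute $c>1$.

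I expect the main obstacle to be the off-diagonal analysis in the second moment. Over a number field the approximate functional equation has length $\asymp k^{d}$, so when $d\ge2$ there are moduli $\c$ of small norm for which the Bessel weight is not small, and the resulting sums of Hilbert--Kloosterman sums must be controlled through the Weil bound together with a careful count of ideals; keeping every estimate uniform in $k$ (and tracking the dependence on $\g$ and on $F$) across the approximate functional equation, the trace formula, and the special-value lower bounds is the principal technical burden.
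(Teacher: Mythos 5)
Your overall skeleton---a first and a second moment combined by Cauchy--Schwarz, with the no-Siegel-zero hypothesis entering through a Hoffstein--Lockhart lower bound on $\res_{s=1}L(s,\f\otimes\f)$ to control the harmonic weights $\omega_\f\ll(\log k)/k$---is exactly the paper's, and your first-moment treatment (approximate functional equation plus Petersson, with the Kloosterman/Bessel terms shown to be negligible) is essentially the paper's Sections 4 and 5, up to one small inaccuracy: since $L(s,\f\otimes\g)$ carries the factor $\zeta_F^{\n}(2s)$, which has a pole at $s=\tfrac12$, the diagonal term is $\sum_d a_d(\n)d^{-1}V_{1/2}(\cdot)\asymp\log k$ rather than $\asymp 1$ (this only helps you). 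The genuine gap is your second moment. Opening $|L(\tfrac12,\f\otimes\g)|^2$ by an approximate functional equation gives a sum of length $\norm(\m\m')\ll k^{2+\epsilon}$ (multi-index, since the analytic conductor is $\asymp k^2$), while the family $\Pi_k(\ringO_F)$ has size only $\asymp k$. After Petersson, the off-diagonal cannot be disposed of by ``the Weil bound together with a careful count of ideals'': for pairs with $\norm(\m\m')$ near the top of the range and moduli $c$ of small norm, the factor $J_{k-1}\bigl(4\pi\sqrt{\cdot}/|c|\bigr)$ sits in its transition or oscillatory regime and is only polynomially small, so Weil's bound leaves a remainder that is a positive power of $k$, not $O(\log^{c}k)$. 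Extracting the missing cancellation is a shifted-convolution problem for $\g\otimes\g$, uniform in $k$; this is precisely the difficulty the paper flags (``the classical approach for estimating the second moment is very complicated in our setting'') and deliberately avoids. As written, your second-moment step does not close, and with it the whole argument.

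What the paper does instead is Blomer's Rankin--Selberg unfolding, which bypasses all off-diagonal analysis. Setting $v_\nu(z,s)=\norm(\t_\nu)^s g_\nu(z)E_\nu(z,s)$, Bessel's inequality gives $\sum_{\f\in\Pi_k(\ringO_F)}\|\f\|^{-2}\left|\langle v(\cdot,\tfrac12),\f\rangle\right|^2\le\|v(\cdot,\tfrac12)\|^2$, and unfolding identifies $\langle v(\cdot,\tfrac12),\f\rangle$ with explicit gamma factors times $L(\tfrac12,\f\otimes\g)$; on the other side, the pointwise bound $E_\nu^{*}(z,\tfrac12)\ll y^{1/2}$ together with the majorization $y^{1+1/\log k}\ll E_\nu\bigl(z,1+\tfrac{1}{\log k}\bigr)$ lets one refold and bound $\|v(\cdot,\tfrac12)\|^2$ by gamma factors times $L\bigl(1+\tfrac{1}{\log k},\g\otimes\g\bigr)\ll(\log k)^{c_1}$, an edge-of-critical-strip estimate. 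Hoffstein--Lockhart then converts the $\|\f\|^{-2}$ weights into the natural count, yielding $\sum_\f|L(\tfrac12,\f\otimes\g)|^2\ll k\log^{c}k$, which is what feeds into Cauchy--Schwarz. To repair your proposal you should replace your AFE-plus-Petersson second moment by this unfolding argument (or else supply a genuine shifted-convolution estimate of comparable strength); the rest of your outline then matches the paper.
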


Such a result is obtained by establishing asymptotics for certain twisted first and second moments. The classical approach for estimating the second moment is very complicated in our setting. To surmount this difficulty, we apply a short and simple alternative by using the Rankin-Selberg unfolding method as elegantly employed by Blomer \cite{blomer}. Non-vanishing results that are similar in strength to our work  are known for
$L$-functions $L(\frac12, f)$ of classical modular forms by the work of Duke \cite{duke} and Lau-Tsang \cite{lau-tsang}. However, for Rankin-Selberg $L$-functions, the best known weight aspect result prior to our work is a lower bound of order $k^{1-\epsilon}$ due to Liu and Masri \cite{liu-masri} which is also based in parts on \cite{blomer}. 

We mention here that another family worth studying is one in which the Hilbert modular forms involved have different weights, one of which is fixed and the other is varying. More precisely, let $\g\in\Pi_{l}(\n)$ for some fixed $l\in2\mathbb{N}^{n}$ and $\n\subset\ringO_{F}$.  By obtaining an asymptotic formula for the harmonic sum of the values $L(\frac12,\f\otimes\g)$ as $\f$ varies in $\Pi_k(\ringO_{F})$ (which the authors showed in \cite[Proposition 2.4]{2016RAMA}) and using the subconvexity bound $L(\frac12,\f\otimes \g)\ll k^{\delta+\epsilon}$ in \cite{mv}, one could show that $\#\left\{\f\in\Pi_k(\ringO_{F})\,:\, L\left(\frac12,\f\otimes\g\right)\neq 0\right\}\gg k^{1-\delta-\epsilon}$. However, establishing a sharper bound will undoubtedly require more elaborate techniques.


\section{Notations and Preliminaries}\label{sec:background}

Throughout the paper, we fix a totally real number field $F$ of degree $n$ over $\mathbb{Q}$, 
and we impose the condition that the Dedekind zeta function $\zeta_F$ has no Landau-Siegel zero. This assumption is used in the proof of Lemma \ref{lem:omega}. It is worth mentioning here that, by the work of Stark \cite{stark}, we know that
every Galois field with odd degree over $\mathbb{Q}$  satisfies this condition.

Once and for all, we fix an order of the real embeddings, $\sigma_j$, of $F$, say $\sigma:=(\sigma_1,\dots,\sigma_n)$. As such, we can identify an element $x$ in $F$ with the $n$-tuple $(x_{1},\dots,x_{n})$ in $\mathbb{R}^{n}$ where $x_j=\sigma_j(x)$. This tuple may be, again, denoted by $x$ when no confusion arises. 
We say $x$ is totally positive and write $x\gg0$ if $x_{j}>0$ for all $j$, and for any subset $X\subset F$, we put $X^{+}=\{x\in X: x\gg0\}$.

We denote the narrow class group of $F$ by $Cl^{+}(F)$ and its cardinality by $h$. We let  $\{\t_{1},\t_{2},\dots,\t_{h}\}$ be a fixed choice of representatives of the narrow ideal classes in $Cl^{+}(F)$. We write $\a\sim\b$ when fractional ideals $\a$ and $\b$ belong to the same narrow ideal class, in which case we have $\a=\xi\b$ for some $\xi$ in $F^{+}$. The symbol $[\a\b^{-1}]$ is used to refer to this element $\xi$ which is unique up to multiplication by totally positive units in $\ringO_{F}$.


To simplify the exposition of this paper, we frequently use multi-index notation as follows: For given $n$-tuples ${x}$ and ${z}$ and a scalar $a$, we set 
$${x}^{{z}}=\prod_{j=1}^{n}x_{j}^{z_{j}},\quad x^a=\prod_{j=1}^n x_j^a, \quad  \text{and}\quad a^{{z}}=a^{\sum_{j=1}^{n}z_{j}}. $$
Such multi-index notation will also be employed to denote certain products of the gamma function and the $J$-Bessel function.

Let $k=(k_1,\dots, k_n)\in 2\mathbb{N}^{n}$, and let $\n$ be an integral ideal in $F$. We denote by $S_{k}(\n)$ the space of ad\`elic Hilbert cusp forms of weight $k$, level $\n$, and with the trivial character. The Fourier coefficient of an ad\`elic Hilbert cusp form $\f$ at an integral ideal $\m\subset\ringO_F$ is denoted by $C_\f(\m)$, after suitable normalization. We say $\f$ is normalized if $C_{\f}(\ringO_{F})=1$.

As it is well-known, an ad\`elic Hilbert cusp form $\f$ in $S_{k}(\n)$ can be viewed as an $h$-tuple $(f_{1},\dots,f_{h})$ of classical Hilbert cusp forms $f_{\nu}$ of weight $k$ with respect to the congruence subgroup
\[\Gamma_\nu(\n)=\left\{\left[\begin{array}{cc} a&b\\c&d\end{array}\right]\in\GL_2^+(F)\,:\, a, d\in\ringO_F, b\in\t_{\nu}\d_F^{-1},c\in\n\t_{\nu}^{-1}\d_F,ad-bc\in\ringO_F^{\times +}\right\},\]
where $\d_F$ is the different ideal of $F$.

The space of ad\`elic cusp forms can be decomposed as $S_{k}(\n)=S_{k}^{\mathrm{old}}(\n)\oplus S_{k}^{\mathrm{new}}(\n)$ where $S_{k}^{\mathrm{old}}(\n)$ is the subspace of cusp forms that come from lower levels, and the new space $S_{k}^{\mathrm{new}}(\n)$ is the orthogonal complement of $S_{k}^{\mathrm{old}}(\n)$ in $S_{k}(\n)$ with respect to the Petersson inner product defined as
\begin{equation}\label{eqn:petersson}
\left<\f,\g\right>_{\n}=\sum_{\nu=1}^h\left<f_\nu,g_\nu\right>_{\n}=\sum_{\nu=1}^h\frac{1}{\mu(\Gamma_\nu(\n)\backslash\h^n)}\int_{\Gamma_\nu(\n)\backslash\h^n}
\overline{f_\nu(z)}g_\nu(z)y^kd\mu(z),
\end{equation}
where $d\mu(z)=\prod_{j=1}^ny_j^{-2}dx_{j}dy_{j}$.

A Hilbert cusp form $\f$ in $S_{k}(\n)$ is said to be primitive if it is a normalized common Hecke eigenfunction in the new space. We denote by $\Pi_{k}(\n)$ the (finite) set of all primitive forms of weight $k$ and level $\n$. If $\f$ is in $\Pi_{k}(\n)$, it follows from the work of Shimura \cite{shimura-duke} that $C_{\f}(\m)$ is equal to the Hecke eigenvalue for the Hecke operator $T_{\m}$ for all $\m\subset\ringO_{F}$. Moreover, since $\f$ is with the trivial character, the coefficients $C_{\f}(\m)$ are known to be real for all $\m$.

A brief account on Hilbert modular forms can be found in a recent work of the authors \cite[Section~1.2]{2016RAMA} or Trotabas \cite[Section~3]{trotabas}. However, for a more detailed exposition on the topic, the reader is referred to Garrett \cite[Chapter 1, 2]{garrett}, Raghuram-Tanabe \cite[Section 4]{JRMS-2011}, and Shimura \cite[Section 2]{shimura-duke}.

Given two primitive forms $\f\in S_{k}(\ringO_{F})$ and $\g\in S_{k}(\n)$, one defines the $L$-series for the Rankin-Selberg convolution of $\f$ and $\g$ as 
\[L(s,\f\otimes \g)=\zeta_{F}^{\n}(2s)\sum_{\m\subset\ringO_{F}}\frac{C_{\f}(\m)C_{\g}(\m)}{\norm(\m)^{s}},\]
where 
\[\zeta_{F}^{\n}(2s)=\prod_{\p|\n}(1-\norm(\p)^{-2s})\zeta_{F}(2s)=\sum_{d=1}^{\infty}\frac{a_{d}(\n)}{d^{2s}}.\] 
Here, $a_d(\n)$ represents the number of ideals with norm $d$ that are relatively prime to $\n$. 

It follows from the Ramanujan-Petersson bound on the Fourier coefficients of $\f$ and $\g$ (proven by Blasius in \cite{blasius}) that this series converges absolutely for $\Re(s)>1$.   
 We define the Archimedean part of this $L$-function as 
 \[L_{\infty}(s,\f\otimes\g)=\prod_{j=1}^{n}\Gamma\left(s\right)\Gamma\left(s-1+k_{j}\right),\]
 and put
 \[\Lambda(s,\f\otimes \g)=(2\pi)^{-2sn-k}\norm(\d_F^2\n)^{s}L_{\infty}(s,\f\otimes\g)L(s,\f\otimes \g).\]
  Then $\Lambda(s,\f\otimes \g)$ has a meromorphic continuation to $\mathbb{C}$ (see Proposition \ref{prop:shimura} below) and satisfies the functional equation \begin{equation}\label{functional-equation}\Lambda(s,\f\otimes\g)=\Lambda(1-s,\f\otimes \g).\end{equation}
 
\begin{prop}{\rm (Shimura~\cite[Proposition 4.13]{shimura-duke}).} \label{prop:shimura}
The product
\[L(s,\f\otimes\g)L_\infty(s, \f\otimes\g)\]
has a meromorphic continuation to the whole plane, with possible simple poles at $s=1$ and $s=0$. The residue of $L(s,\f\otimes\g)$ at $s=1$ is
\[2^{n-1}(4\pi)^{k}\zeta_{F}(2)\Gamma(k)^{-1}R_F[\ringO_{F}^{\times+}:\ringO_F^{\times 2}]^{-1}\left<\f,\g\right>_\n\]
where $\ringO_F^{\times 2}$ is the group of squares of units in $\ringO_{F}$. 
\end{prop}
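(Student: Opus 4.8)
The plan is to prove this by the \emph{Rankin--Selberg unfolding method}: I would realize the Dirichlet series defining $L(s,\f\otimes\g)$ as a Petersson pairing of $\overline{f_\nu}\,g_\nu\,y^k$ against a real-analytic Eisenstein series, summed over the narrow class representatives, and then import the analytic continuation and the residue of that Eisenstein series. Since the continuation of $L(s,\f\otimes\g)L_\infty(s,\f\otimes\g)$ and its functional equation are a formal consequence of the corresponding properties of the Eisenstein series, the real content is the identity produced by unfolding together with the exact evaluation of the residue.

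First, for each representative $\t_\nu$ I would introduce the real-analytic Eisenstein series
\[E_\nu(z,s)=\sum_{\gamma\in\Gamma_{\nu,\infty}\backslash\Gamma_\nu(\n)}\prod_{j=1}^{n}\mathrm{Im}(\gamma z)_j^{\,s},\]
where $\Gamma_{\nu,\infty}$ is the stabilizer of the cusp at $\infty$; this converges for $\Re(s)>1$. Pairing $\overline{f_\nu(z)}\,g_\nu(z)\,y^k$ against $E_\nu(z,s)$ and unfolding the sum over $\Gamma_{\nu,\infty}\backslash\Gamma_\nu(\n)$ collapses the integral from $\Gamma_\nu(\n)\backslash\h^n$ onto the strip $\Gamma_{\nu,\infty}\backslash\h^n$. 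Inserting the Fourier expansions of $f_\nu$ and $g_\nu$ and integrating over the $x$-variables kills all off-diagonal frequencies, leaving a single sum over totally positive $\xi$ whose $y$-integrals are products of Gamma integrals contributing $\prod_{j}\Gamma(s+k_j-1)$ together with the archimedean factors $(4\pi)^{-k}$ responsible for the $(4\pi)^{k}$ in the final residue. Summing the arithmetic factors over $\nu$ reassembles the Hecke coefficients $C_\f(\m)C_\g(\m)$ and the factor $\zeta_F^{\n}(2s)$, producing, after matching the archimedean normalization, an identity of the shape
\[\sum_{\nu=1}^{h}\int_{\Gamma_\nu(\n)\backslash\h^n}\overline{f_\nu(z)}\,g_\nu(z)\,E_\nu(z,s)\,y^k\,d\mu(z)=\kappa(s)\,L(s,\f\otimes\g)\,L_\infty(s,\f\otimes\g),\]
where $\kappa(s)$ is an explicit elementary factor built from powers of $2$, $\pi$, and $\norm(\d_F^2\n)$. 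The meromorphic continuation of $L(s,\f\otimes\g)L_\infty(s,\f\otimes\g)$ to all of $\mathbb{C}$ then follows from the known continuation of each $E_\nu(z,s)$, whose only pole in $\Re(s)\ge\tfrac12$ is a simple pole at $s=1$, and the functional equation $s\mapsto 1-s$ of the Eisenstein series accounts for the second possible pole at $s=0$.

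To compute the residue I would use the classical fact that $\mathrm{Res}_{s=1}E_\nu(z,s)$ is a \emph{constant} function of $z$, equal to $\kappa_0/\mu(\Gamma_\nu(\n)\backslash\h^n)$ for a constant $\kappa_0$ independent of $\nu$ and governed by $\mathrm{Res}_{s=1}\zeta_F(s)$. Taking residues at $s=1$ in the unfolding identity and invoking the normalization $\langle f_\nu,g_\nu\rangle_\n=\mu(\Gamma_\nu(\n)\backslash\h^n)^{-1}\int\overline{f_\nu}\,g_\nu\,y^k\,d\mu$ turns the left side into $\kappa_0\sum_{\nu}\langle f_\nu,g_\nu\rangle_\n=\kappa_0\langle\f,\g\rangle_\n$, while the right side gives $\kappa(1)\,L_\infty(1,\f\otimes\g)\,\mathrm{Res}_{s=1}L(s,\f\otimes\g)$. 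Since $L_\infty(1,\f\otimes\g)=\prod_{j}\Gamma(k_j)=\Gamma(k)$, dividing produces the factor $\Gamma(k)^{-1}$, and solving for $\mathrm{Res}_{s=1}L(s,\f\otimes\g)$ yields the claimed expression.

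The hard part will be the last piece of bookkeeping: pinning down $\kappa_0$ and the fundamental volume $\mu(\Gamma_\nu(\n)\backslash\h^n)$ exactly. This is where the analytic class number formula for $\zeta_F$ (furnishing the $2^{n-1}$ and the regulator $R_F$), the precise computation of the volume (furnishing the unit index $[\ringO_F^{\times+}:\ringO_F^{\times2}]^{-1}$ and the discriminant factor $\norm(\d_F)$, which must cancel against $\norm(\d_F^2\n)$ coming from $\kappa(1)$), and the evaluation of $\zeta_F^{\n}(2s)$ near $s=1$ against the level-dependent index $[\Gamma_\nu(\ringO_F):\Gamma_\nu(\n)]$ entering the volume must all be combined with scrupulous care. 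Any misnormalization of the Eisenstein series, the invariant measure, or the Petersson product propagates directly into the multiplicative constant, so the main obstacle is arithmetic exactness rather than any conceptual difficulty.
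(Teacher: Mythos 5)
Your proposal is correct and takes essentially the same route as the source this proposition is quoted from: the paper gives no proof of its own, citing Shimura~\cite[Proposition~4.13]{shimura-duke}, whose argument is precisely the Rankin--Selberg unfolding against class-group-indexed Eisenstein series that you outline (machinery the paper itself redeploys in Section~\ref{sec:second}). Your accounting of the constants --- $\Gamma(k)^{-1}$ from dividing by $L_\infty(1,\f\otimes\g)$, $(4\pi)^{k}$ from the $y$-integrals, and $R_F$ together with $[\ringO_F^{\times+}:\ringO_F^{\times 2}]^{-1}$ from the Eisenstein residue and the volume computation --- is the correct bookkeeping, even though you leave its exact execution unfinished.
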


\section{Proof of the main theorem}

In this section, we prove Theorem \ref{thm:main}. To this end, we need good lower and upper bounds on the first and second moments, respectively, of the central values $|L(\frac12,\f\otimes\g)|$ for a fixed $\g$ in $\Pi_k(\n) $ as $\f$ varies in $\Pi_k(\ringO_{F})$.
\begin{prop}\label{prop:main}
Let $\g\in\Pi_k(\n)$ be a primitive form. As $k\to\infty$, we have

\[\sum_{\f\in\Pi_k(\ringO_{F})}\left|L\left(\frac12,\f\otimes\g\right)\right|\gg k\]
and

\[\sum_{\f\in\Pi_k(\ringO_{F})}\left|L\left(\frac12,\f\otimes\g\right)\right|^2\ll k\log^{c} k,\]
for some positive integer $c$.
\end{prop}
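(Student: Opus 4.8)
The plan is to prove both estimates first in harmonically weighted form and then pass to the natural averages. Introduce the Petersson weights $\omega_\f$, normalized so that the Petersson trace formula for Hilbert cusp forms reads $\sum_{\f\in\Pi_k(\ringO_F)}\omega_\f C_\f(\m)C_\f(\m')=\delta_{\m=\m'}+(\text{Kloosterman--Bessel terms})$, with total harmonic mass $\sum_\f\omega_\f\asymp 1$. Lemma \ref{lem:omega} records that $\omega_\f\asymp k^{-1}$ up to powers of $\log k$; the lower bound on $\omega_\f$ (equivalently, that no $\f$ has an anomalously large Petersson norm) is exactly where the hypothesis that $\zeta_F$ has no Landau--Siegel zero enters, since it rules out an anomalously small symmetric-square value $L(1,\mathrm{sym}^2\f)$ at the edge. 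Granting this, the second-moment bound follows from its harmonic analogue by the crude inequality $\sum_\f|L(\tfrac12,\f\otimes\g)|^2\le(\max_\f\omega_\f^{-1})\sum_\f\omega_\f|L(\tfrac12,\f\otimes\g)|^2\ll k\log^{c}k$, the extra logarithm being harmless. For the first moment I cannot afford to lose a logarithm, so the weights must be removed by the standard device of writing $\omega_\f^{-1}$ as a constant times $k\,L(1,\mathrm{sym}^2\f)$ and expanding $L(1,\mathrm{sym}^2\f)$ as a rapidly convergent Dirichlet series in the Hecke eigenvalues of $\f$, which is then folded into the moment computation; only the leading ($\m=\ringO_F$) term survives and produces the clean factor $k$, the remaining twisted moments being of lower order.

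For the harmonic first moment, apply the approximate functional equation furnished by \eqref{functional-equation} to write $L(\tfrac12,\f\otimes\g)$ as a smoothly truncated Dirichlet series of polynomial length in $k$ in the Rankin--Selberg coefficients, and sum against $\omega_\f$. The Petersson formula isolates the diagonal, whose leading ($\m=\ringO_F$) contribution is a positive convergent series bounded above and below by absolute constants, while the off-diagonal Kloosterman terms carry a product of $J$-Bessel functions $J_{k_j-1}$ whose arguments are $\ll k$, hence are exponentially small for large weight and contribute negligibly. This yields $\sum_\f\omega_\f L(\tfrac12,\f\otimes\g)\gg 1$, and since the weights are positive the triangle inequality gives $\sum_\f\omega_\f|L(\tfrac12,\f\otimes\g)|\ge|\sum_\f\omega_\f L(\tfrac12,\f\otimes\g)|\gg 1$, as required. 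This step is the Hilbert-modular analogue, for the equal-weight family, of the harmonic first moment computed by the authors in \cite{2016RAMA}.

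For the second moment I follow Blomer's unfolding method \cite{blomer}, which replaces the unwieldy expansion of $|L(\tfrac12,\f\otimes\g)|^2$ into shifted convolution sums by a single spectral bound. Using Rankin--Selberg unfolding one writes the central value as an inner product $L(\tfrac12,\f\otimes\g)=c\,\langle f_\f,\Phi\rangle$ against a fixed (non-holomorphic) function $\Phi$ built from $\g$ and a real-analytic Eisenstein series $E(\cdot,s)$ evaluated at its central parameter. Because $\{f_\f\}$ is an orthogonal basis of $S_k(\ringO_F)$ and $\omega_\f\asymp c_k/\|f_\f\|^2$, Bessel's inequality gives
\[\sum_\f\omega_\f\left|L(\tfrac12,\f\otimes\g)\right|^2\ll c_k\,\|\mathrm{proj}_{S_k}\Phi\|^2\le c_k\,\|\Phi\|^2,\]
and, up to the constant $c_k$, $\|\Phi\|^2$ is itself a Rankin--Selberg integral $\int_{\Gamma\backslash\h^n}|\g(z)|^2\,|E(z,s)|^2\,y^{k}\,d\mu(z)$ of $|\g|^2$ against $|E|^2$, which I expect to evaluate to size $\ll\log^{c}k$.

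The main obstacle is the last estimate. The Eisenstein series at its central parameter is not square-integrable, so the integral defining $\|\Phi\|^2$ must be regularized, and one must control the resulting Rankin--Selberg integral of $|\g|^2$ against $|E|^2$ uniformly in the weight $k$, extracting exactly the power of $\log k$ that becomes the constant $c$. Tracking the normalizing constant $c_k$ against the $y^k$-weight built into $\Phi$, so that the harmonic second moment comes out $\ll\log^{c}k$ rather than with spurious powers of $k$, is the delicate bookkeeping; once this is in place, combined with Lemma \ref{lem:omega} and the weight-removal of the first paragraph, both displayed inequalities follow.
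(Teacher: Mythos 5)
Your overall architecture (harmonic first moment via the Petersson formula, second moment via Blomer's unfolding plus Bessel's inequality) matches the paper, but both halves have genuine gaps, and the more serious one is in the second moment. You set up the inner-product framework correctly and then declare the evaluation of $\int_{\Gamma_\nu(\n)\backslash\h^n}|g_\nu(z)|^2E_\nu\left(z,\frac12\right)^2y^k\,d\mu(z)$ to be ``the main obstacle,'' suggesting it requires regularizing a non-square-integrable Eisenstein series. It does not, and supplying exactly this step is the content of the paper's Section 6: the Fourier expansion of the completed series $E_\nu^*(z,s)$ shows that the poles of the two constant terms cancel at $s=\frac12$, giving the pointwise bound $E_\nu^*\left(z,\frac12\right)\ll y^{1/2}$ on a fundamental domain (the integral is then absolutely convergent because $g_\nu$ decays rapidly); since every term of $E_\nu(z,s)$ is positive for real $s>1$, one has $y^{1+1/\log k}\le E_\nu\left(z,1+\frac{1}{\log k}\right)$, so the square of the central value is majorized by a convergent Eisenstein series just to the right of $1$; unfolding then yields $\Gamma\left(k+\frac{1}{\log k}\right)L\left(1+\frac{1}{\log k},\g\otimes\g\right)$ up to explicit constants, and an edge-of-critical-strip bound gives the power of $\log k$. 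Without this chain your second-moment estimate is asserted, not proved. (Your weight-removal step $\sum_\f|L|^2\le\left(\max_\f\omega_\f^{-1}\right)\sum_\f\omega_\f|L|^2$ is harmless, but the lower bound on $\omega_\f$ it needs follows from the standard upper bound $L(1,\mathrm{sym}^2\f)\ll\log^{c}k$ and has nothing to do with the Siegel-zero hypothesis; the paper works directly with $1/\|\f\|^2$ inside Bessel's inequality and never needs this detour.)

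In the first moment there are two problems. First, the diagonal of the harmonic moment is not ``bounded above and below by absolute constants'': it equals $2\sum_d a_d(\n)d^{-1}V_{1/2}(\cdot)$, a harmonic-type series effectively truncated at $d\ll k^{1/2+\epsilon}$, and is $\asymp\log k$ (Lemma \ref{lem:first}); this is harmless for a lower bound but it matters for what comes next. Second, your removal of the weights by expanding $\omega_\f^{-1}\propto k\,L(1,\mathrm{sym}^2\f)$ as a Dirichlet series and asserting that ``only the leading term survives'' is unjustified: the twisted harmonic moments $\sum_\f\omega_\f C_\f(\m^2)L\left(\frac12,\f\otimes\g\right)$ produce, via Hecke multiplicativity, additional diagonal main terms weighted by coefficients of $\g$ whose signs are not controlled, and ruling out cancellation against the leading term (as well as truncating the series at the edge $s=1$, which itself requires Hoffstein--Lockhart input) is precisely the delicate part of that device. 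The paper avoids all of this with a log-cancellation trick: from $\sum_\f\omega_\f L\left(\frac12,\f\otimes\g\right)=\gamma_{-1}^{\n}(F)\log k+O(1)$ and the upper bound $\omega_\f\ll(\log k)/k$ of Lemma \ref{lem:omega}, one gets $\log k\ll\frac{\log k}{k}\sum_\f\left|L\left(\frac12,\f\otimes\g\right)\right|$, and the logarithms cancel to give exactly $\gg k$. Note finally that you have the role of the hypothesis backwards: no Siegel zero for $\zeta_F$ (via Banks and Hoffstein--Lockhart) rules out anomalously \emph{small} $L(1,\mathrm{sym}^2\f)$, i.e.\ anomalously small Petersson norm, which is an \emph{upper} bound on $\omega_\f$ --- exactly what the log-cancellation needs --- not the lower bound on $\omega_\f$ you attribute to it.
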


\begin{proof}
The lower bound on the first moment is proven in Section~\ref{sec:first}. For the upper bound on the second moment, see Section~\ref{sec:second}.
\end{proof}



 
\begin{remark} {\emph{Since the Hilbert modular forms under consideration in this paper are not necessarily of parallel weight, note that we write $k\to \infty$ to mean that $\max\{k_j\}\to \infty$ and $\min\{k_j\}>M$ for a fixed constant $M$.} }
\end{remark}
Assuming Proposition \ref{prop:main}, we complete the proof of the main theorem. By applying Cauchy-Schwarz inequality we obtain
\begin{align*}
k&\ll\sum_{\f\in\Pi_k(\ringO_{F})}\left|L\left(\frac12,\f\otimes \g\right)\right|\ll
\left(\sum_{\substack{\f\in\Pi_k(\ringO_{F}) \\ L(1/2, \f\otimes\g)\neq 0}} 1\right)^{1/2}\left(\sum_{\f\in\Pi_k(\ringO_{F})}\left|L\left(\frac12,\f\otimes\g\right)\right|^2\right)^{1/2}. 
\end{align*}
Hence, we get
\[\sum_{\substack{\f\in\Pi_k(\ringO_{F}) \\ L(1/2, \f\otimes\g)\neq 0}} 1\gg \frac{k}{\log^{c}k}.\] 
 
\vskip .2in


\section{First Moment}\label{sec:first}

The aim of this section is to prove the first part of Proposition~\ref{prop:main}. More precisely, we obtain the following lower bound for the first moment,
\[\sum_{\f\in\Pi_k(\ringO_{F})}\left|L\left(\frac12,\f\otimes\g\right)\right|\gg k.\]
For $\f\in \Pi_{k}(\ringO_{F})$, we define the harmonic weight 
\[ \omega_\f=\frac{\Gamma(k-1)}{(4\pi)^{k-1}|d_F|^{1/2}\left<\f,\f\right>_{\ringO_{F}}},\] 
where $d_F$ is the discriminant of $F$ and $\left<\f,\f\right>_{\ringO_{F}}$ is the Petersson inner product on the space $S_{k}(\ringO_{F})$. 

The point of departure in this section is a twisted first moment of the central values $L(1/2, \f\otimes \g)$ where $\g$ is fixed in $\Pi_{k}(\n)$. More precisely, we consider the weighted harmonic sum
\begin{equation}\label{eqn:average}
\sum_{\f\in\Pi_{k}(\ringO_{F})}L\left(\frac12,\f\otimes \g\right)\omega_\f.
\end{equation}
 A standard application of an approximate functional equation and a Petersson trace formula (see \cite[Proposition~6.3]{trotabas}) allows us to express \eqref{eqn:average} as 

\[\sum_{\f\in\Pi_{k}(\ringO_{F})}L\left(\frac12,\f\otimes \g\right)\omega_\f=M_{\g}(k)+E_{\g}(k)\]
where 
\begin{equation}\label{eqn:main}
M_{\g}(k)=2\sum_{d=1}^{\infty}\frac{a_{d}(\n)}{d}V_{1/2}\left(\frac{4^n\pi^{2n}d^2}{\norm(\d_F^2\n)}\right)
\end{equation}
and
\begin{eqnarray}\label{eqn:error}
E_{\g}(k)&=&2C\sum_{\{\t_\nu\}_\nu}\sum_{\alpha\in(\t_\nu^{-1})^{+}/\ringO_{F}^{\times +}} \frac{C_{\g}(\alpha\t_\nu)}{\sqrt{\norm(\alpha\t_\nu)}}\sum_{d=1}^{\infty}\frac{a_{d}(\n)}{d}V_{1/2}\left(\frac{4^n\pi^{2n}\norm(\alpha\t_\nu)d^2}{\norm(\d_F^2\n)}\right)\\
&&\hspace{.2in}\times\sum_{\substack{\t_\mu^{2}=\t_\nu\\ c\in\t_\mu^{-1}\backslash\{0\}\\ \epsilon\in\ringO_{F}^{\times+}/\ringO_{F}^{\times2}}}\frac{{\mathit Kl}(\epsilon\alpha,\t_\nu; 1,\ringO_F;c,\t_\mu)}{\norm(c\t_\mu)}J_{k-1}\left(\frac{4\pi\sqrt{\epsilon\alpha\left[\t_\nu\t_\mu^{-2}\right]}}{|c|}\right).\nonumber
\end{eqnarray}
Here ${\mathit Kl(*)}$ and $J_{k-1}(*)$ are respectively the Kloosterman sum and a product of the classical $J$-Bessel functions $J_{k_j-1}(*)$, both of which come from the Petersson trace formula. 
The function $V_{1/2}(y)$ originates from the approximate functional equation, and it admits the following integral representation
\begin{equation}\label{eqn:V}
V_{1/2}(y)=\frac{1}{2\pi i}\int_{(3/2)}y^{-u} e^{u^2}\frac{L_\infty(1/2+u,\f\otimes\g)}{L_\infty(1/2,\f\otimes\g)}\frac{du}{u}.
\end{equation}
Moreover, it satisfies 
\begin{equation}\label{V-estimates}V_{1/2}(y)\ll\left(1+\frac{y}{k}\right)^{-A}\quad \text{and}\quad
V_{1/2}(y)=1+O\left(\left(\frac{y}{k}\right)^{\alpha}\right),\end{equation}
where $0<\alpha\leq1$ and the implied constants depend on $A$ and $\alpha$. These estimates follow from Iwaniec-Kowalski \cite[Proposition~5.4]{iwaniec-kowalski}.
We mention here that much of this can be found in a recent work of the authors \cite[Section~2.1]{2016RAMA}. To proceed further, we need the following estimates.

\begin{lemma}\label{lem:first}
As $k$ approaches infinity, we have
\begin{enumerate}
\item $M_{\g}(k)=\gamma_{-1}^{\n}(F)\log(k)+O(1)$, where $\gamma_{-1}^{\n}(F)$ is twice the residue of $\zeta_{F}^{\n}(2u+1)$ at $u=0$, and $\log (k)=\sum_{j=1}^{n}\log(k_{j})$.
 \item $E_{\g}(k)=o(1)$.
\end{enumerate}
\end{lemma}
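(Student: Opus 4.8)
The plan is to evaluate $M_\g(k)$ by inserting the Mellin--Barnes representation \eqref{eqn:V} of $V_{1/2}$ and interchanging the $d$-summation with the $u$-integration. Writing $Q=4^n\pi^{2n}/\norm(\d_F^2\n)$ and using $\sum_{d\ge1}a_d(\n)d^{-1-2u}=\zeta_F^{\n}(1+2u)$, this turns \eqref{eqn:main} into
\[M_\g(k)=\frac{1}{\pi i}\int_{(3/2)}Q^{-u}e^{u^2}\frac{L_\infty(1/2+u,\f\otimes\g)}{L_\infty(1/2,\f\otimes\g)}\,\zeta_F^{\n}(1+2u)\,\frac{du}{u},\]
the interchange being justified by absolute convergence on $\Re u=3/2$ together with the Gaussian factor $e^{u^2}$. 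I would then shift the contour to $\Re u=-\delta$ for a small fixed $\delta>0$, crossing a double pole at $u=0$ produced by the simple pole of $\zeta_F^{\n}(1+2u)$ and the factor $1/u$. Collecting the residue and using the Stirling estimate $\psi(k_j-1/2)=\log k_j+O(1/k_j)$ for the logarithmic derivative of $L_\infty$ (where $\psi$ is the digamma function), the residue equals $\tfrac12\gamma_{-1}^{\n}(F)\log k+O(1)$; the normalization of $\gamma_{-1}^{\n}(F)$ as twice the residue of $\zeta_F^{\n}(2u+1)$ at $u=0$ is precisely what makes the leading coefficient come out right, so twice the residue gives the claimed main term.

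It remains to bound the shifted integral on $\Re u=-\delta$. On that line $|Q^{-u}|$ is a constant, Stirling gives $|L_\infty(1/2+u,\f\otimes\g)/L_\infty(1/2,\f\otimes\g)|\ll k^{-\delta}(1+|\Im u|)^{C}$, and $\zeta_F^{\n}(1+2u)$ grows at most polynomially in $|\Im u|$; the factor $e^{u^2}=e^{\delta^2-(\Im u)^2}$ decays like a Gaussian and absorbs all polynomial growth. Hence the shifted integral is $O(k^{-\delta})=o(1)$, completing part (1).

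\textbf{Part (2): the error term.} For $E_\g(k)$ the decisive feature is the product of $J$-Bessel functions $J_{k-1}$ of large order against an argument that stays small. First I would use the estimate $V_{1/2}(y)\ll(1+y/k)^{-A}$ from \eqref{V-estimates} to truncate both the $d$-sum and the $\alpha$-sum, so that effectively $\norm(\alpha)\,d^2\ll k^{1+\epsilon}$; in particular $\norm(\alpha)\ll k^{1+\epsilon}$ and $d\ll k^{1/2+\epsilon}$. Next I would bound the remaining ingredients trivially: the Ramanujan--Petersson bound of Blasius gives $C_\g(\alpha\t_\nu)\ll\norm(\alpha\t_\nu)^{\epsilon}$, the Kloosterman sums are bounded polynomially in $\norm(c\t_\mu)$, and the $d$-sum is $O(\log k)$. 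The gain comes from the classical inequality $J_\nu(x)\le(x/2)^{\nu}/\Gamma(\nu+1)$: since the Bessel argument has $j$-th component $\asymp\sqrt{\sigma_j(\epsilon\alpha[\t_\nu\t_\mu^{-2}])}/|\sigma_j(c)|$, each factor carries a denominator $\Gamma(k_j)$, and the product is smaller than any fixed power of $k^{-1}$ once the argument is $\ll\sqrt{k}$. Summing these super-polynomially small terms against the polynomially many choices of $\alpha$, $c$, $d$, $\epsilon$, $\nu$, and $\mu$ yields $E_\g(k)=o(1)$.

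The main obstacle is making the Bessel estimate genuinely uniform in the Hilbert setting. Because $\norm(\alpha)=\prod_j\sigma_j(\alpha)$ is a product, the individual components $\sigma_j(\alpha)$ need not all be of size $\ll k$ even when the norm is, so one cannot bound each factor $J_{k_j-1}$ in isolation; similarly the smallest nonzero $|\sigma_j(c)|$ on the lattice $\t_\mu^{-1}$ can be small and inflate a single component of the argument. I expect to resolve this by treating the full product $\prod_j(x_j/2)^{k_j-1}/\Gamma(k_j)$ together with the $\alpha$- and $c$-summations, exploiting that the $V_{1/2}$-weight ties $\prod_j\sigma_j(\alpha)$ to $k$ and that, since $\max_j k_j\to\infty$, the factorial in the largest-weight factor forces super-polynomial decay while the bounded factors $|J_\nu|\le1$ control the rest. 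This uniformity, rather than any single estimate, is the delicate point of the argument.
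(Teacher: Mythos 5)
Your part (1) is correct and is essentially the paper's own proof: insert \eqref{eqn:V} into \eqref{eqn:main}, fold the $d$-sum into $\zeta_F^{\n}(2u+1)$, shift the contour past the double pole at $u=0$ (the paper shifts to $\Re u=-\tfrac12+\epsilon$, you to $-\delta$; both work), extract $\gamma_{-1}^{\n}(F)\log k$ from the residue via the digamma asymptotics, and bound the shifted integral with Stirling.

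Part (2) contains a genuine gap, located exactly where you flag it, and the repair you propose does not work. The bound $J_\nu(x)\le (x/2)^\nu/\Gamma(\nu+1)$ is increasing in $x$ and is vacuous (larger than $1$) once $x\gg\nu$: it yields decay only when the argument of \emph{that particular factor} is small compared with its order. After truncation you control only the product of the arguments, $\prod_j x_j\asymp\sqrt{\norm(\alpha\t_\nu)}\,/\,|\norm(c)|\ll k^{(1+\epsilon)/2}$ (multi-index notation), and the infinite sum over totally positive units $\eta$ --- which your sketch never addresses, although it is the characteristic difficulty of the Hilbert setting --- redistributes this product among the components arbitrarily. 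Consequently there are unavoidable configurations, e.g.\ $n=2$, $k_1\to\infty$, $k_2$ bounded, with a unit chosen so that $x_1\asymp k_1$ and hence $x_2\ll k_1^{-1/2+\epsilon}$, in which the \emph{largest}-weight factor sits in the transition region: there your power bound gives nothing, the true size of $J_{k_1-1}(x_1)$ is only $\asymp k_1^{-1/3}$, and the small argument falls on the \emph{bounded}-weight factor, whose saving $x_2^{k_2-1}$ is a fixed polynomial. This is the exact opposite of the mechanism you describe (``the factorial in the largest-weight factor forces super-polynomial decay while the bounded factors control the rest''). Since your argument takes absolute values everywhere, summing such terms over the $\asymp k^{1+\epsilon}$ ideals $\alpha\t_\nu$ with weight $\norm(\alpha\t_\nu)^{-1/2+\epsilon}$ costs a factor $k^{(1+\epsilon)/2}$, and with only the two estimates you invoke ($|J_\nu|\le1$ and the power-series bound) the resulting estimate fails to be $o(1)$ when the smallest weight is small, and is a delicate polynomial-versus-polynomial competition in general. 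Closing this gap by pointwise bounds would require at least the uniform transition estimates $|J_\nu(x)|\ll\min(\nu^{-1/3},x^{-1/3})$, a case analysis according to which component receives the small argument, and Luo's convergence observation for the unit sums; none of these appear in your sketch, and the last one is also silently needed to justify your truncation step (which in the paper is itself a substantial computation using \eqref{V-estimates}, the Weil bound \eqref{weilbound}, and the Bessel estimates \eqref{J-Bessel0}--\eqref{J-Bessel00} with exponents tuned to whether $|\eta_j|<1$).

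This is precisely why the paper does not argue by pointwise bounds in the truncated range. There it opens the Kloosterman sum into additive characters, represents the Bessel product by the Mellin--Barnes integral \eqref{eqn:j-bessel}, and applies Cauchy--Schwarz in the Kloosterman variable followed by the additive large sieve inequality; this harvests square-root cancellation in the $\alpha$-sum --- a counting/orthogonality input invisible to absolute-value estimates --- producing a factor $k^{(1+\epsilon)(\sigma+\delta_0+1+\epsilon)/2}$ which the Gamma-ratio decay $k^{-\sigma}$ from the Mellin integral then dominates for $\sigma$ large. Your approach would need this, or an equivalent source of cancellation or uniform Bessel decay, before it could be called a proof.
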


The proof of this lemma is found in Section~\ref{sec:lem}. Assuming the lemma for now, we have that
\[ \sum_{\f\in\Pi_k(\ringO_{F})}L\left(\frac12,\f\otimes\g\right)\omega_\f=\gamma_{-1}^{\n}\log k+O(1).\]
It follows that
\begin{equation}\label{eqn:lower}
\log k\ll \sum_{\f\in\Pi_k(\ringO_{F})}L\left(\frac12,\f\otimes\g\right)\omega_\f\ll \frac{\log k}{k}\sum_{\f\in\Pi_k(\ringO_{F})}\left|L\left(\frac12,\f\otimes\g\right)\right|.
\end{equation}
Notice that the second inequality in \eqref{eqn:lower} requires the following lemma.

\begin{lemma}\label{lem:omega}
For $\f\in\Pi_{k}(\ringO_{F})$, we have $ \dis \omega_\f\ll \frac{\log k}{ k}$. \end{lemma}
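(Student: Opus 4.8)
The plan is to convert the bound on the harmonic weight into a lower bound for the residue at $s=1$ of the Rankin--Selberg square $L(s,\f\otimes\f)$, and then to extract that lower bound from the hypothesis that $\zeta_F$ carries no Landau--Siegel zero. First I would invoke Proposition~\ref{prop:shimura} with $\g=\f$ and $\n=\ringO_F$, which identifies $\left<\f,\f\right>_{\ringO_F}$ with the residue of $L(s,\f\otimes\f)$ at $s=1$:
\[\res_{s=1}L(s,\f\otimes\f)=2^{n-1}(4\pi)^{k}\zeta_F(2)\Gamma(k)^{-1}R_F[\ringO_F^{\times+}:\ringO_F^{\times2}]^{-1}\left<\f,\f\right>_{\ringO_F}.\]
Substituting this into the definition of $\omega_\f$, the archimedean factors combine as $(4\pi)^{k}/(4\pi)^{k-1}=(4\pi)^{n}$ and $\Gamma(k-1)/\Gamma(k)=\prod_{j=1}^{n}(k_j-1)^{-1}$, while every quantity depending only on $F$ is absorbed into the implied constant. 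This leaves the clean relation
\[\omega_\f\asymp\frac{1}{\prod_{j=1}^{n}(k_j-1)\,\res_{s=1}L(s,\f\otimes\f)},\]
so that, since $\prod_{j}(k_j-1)\asymp k$, the lemma is equivalent to a lower bound of the shape $\res_{s=1}L(s,\f\otimes\f)\gg(\log k)^{-1}$.

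Next I would use the factorization $L(s,\f\otimes\f)=\zeta_F(s)\,L(s,\mathrm{sym}^2\f)$, which holds exactly here since $\n=\ringO_F$ leaves no Euler factors to remove from $\zeta_F^{\n}(2s)$. As $\zeta_F$ contributes the only pole at $s=1$, the pole is simple and $\res_{s=1}L(s,\f\otimes\f)=\kappa_F\,L(1,\mathrm{sym}^2\f)$, where $\kappa_F$ is the residue of $\zeta_F$ at $s=1$ and depends only on $F$. Hence the estimate reduces to the symmetric-square lower bound
\[L(1,\mathrm{sym}^2\f)\gg\frac{1}{\log k}.\]

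The hard part will be this last lower bound, and it is precisely here that the Landau--Siegel hypothesis enters. The Dirichlet series $\zeta_F(2s)\sum_{\m}C_\f(\m)^2\norm(\m)^{-s}$ defining $L(s,\f\otimes\f)$ has non-negative coefficients and a simple pole at $s=1$, so a Hoffstein--Lockhart type argument (positivity together with a standard zero-free region for the degree-three $L$-function $L(s,\mathrm{sym}^2\f)$) yields $L(1,\mathrm{sym}^2\f)\gg(\log k)^{-1}$ as soon as one can exclude a real zero of $L(s,\mathrm{sym}^2\f)$ in the interval $[1-c/\log k,\,1)$. The sole obstruction is an exceptional real zero of the self-dual $L(s,\mathrm{sym}^2\f)$ near $s=1$; ruling it out is where I would invoke the assumption that $\zeta_F$ has no Landau--Siegel zero, the point being that such an exceptional zero of the symmetric square over $F$ is controlled by, and in the relevant range inherited from, the exceptional zero of $\zeta_F$ through the factorization above. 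With the exceptional zero excluded, combining the three displays gives
\[\omega_\f\ll\frac{\log k}{\prod_{j=1}^{n}(k_j-1)}\ll\frac{\log k}{k},\]
which is the assertion of the lemma.
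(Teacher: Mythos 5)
Your reduction is exactly the paper's: you apply Proposition \ref{prop:shimura} with $\g=\f$, $\n=\ringO_F$ to rewrite $\omega_\f$ in terms of $\res_{s=1}L(s,\f\otimes\f)$, note $\Gamma(k-1)/\Gamma(k)=\prod_j(k_j-1)^{-1}$, and thereby reduce the lemma to the lower bound $\res_{s=1}L(s,\f\otimes\f)\gg(\log k)^{-1}$, i.e.\ to $L(1,\mathrm{sym}^2\f)\gg(\log k)^{-1}$ via the factorization $L(s,\f\otimes\f)=\zeta_F(s)L(s,\mathrm{sym}^{2}\f)$. Up to this point, and including the observation that a Hoffstein--Lockhart/Goldfeld--Hoffstein--Lieman argument delivers this bound once one excludes a real zero of $L(s,\mathrm{sym}^2\f)$ in $[1-c/\log k,\,1)$, your argument is correct and coincides with the paper's.

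The gap is in how you exclude that exceptional zero. You claim it is ``controlled by, and in the relevant range inherited from, the exceptional zero of $\zeta_F$ through the factorization above.'' The factorization gives no such implication: $\zeta_F(s)$ and $L(s,\mathrm{sym}^2\f)$ are \emph{distinct} factors of $L(s,\f\otimes\f)$, and the absence of real zeros of one factor near $s=1$ says nothing whatsoever about real zeros of the other; if anything, the factorization transfers information only in the opposite direction (a zero of either factor is a zero of the product). The statement you actually need --- that a Landau--Siegel zero of the symmetric-square $L$-function over $F$ can occur only if $\zeta_F$ itself has one, so that the hypothesis on $\zeta_F$ kills it --- is a genuine theorem and is precisely where the paper invokes Theorem~1 of Banks \cite{Banks}. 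That theorem is proved using the Gelbart--Jacquet automorphy of $\mathrm{sym}^2\f$ on $\GL(3)$ and a positivity argument applied to an auxiliary Rankin--Selberg $L$-function, in the spirit of the appendix \cite{GHL}; it is not a formal consequence of $L(s,\f\otimes\f)=\zeta_F(s)L(s,\mathrm{sym}^2\f)$. Without citing such a result or reconstructing its proof, the step in which your proposal consumes the no-Siegel-zero hypothesis is unsupported, and this is the only step where that hypothesis does any work.
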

\begin{proof}
Applying Proposition \ref{prop:shimura} gives
\[\omega_{\f}=\frac{2^{n+1}\pi\zeta_F(2) R_F}{(k-1)|d_F|^{1/2}|[\ringO_F^{\times+}:\ringO_F^{\times 2}]\res_{s=1}L(s,\f\otimes\f)}.\]
By assumption, the Dedekind zeta function of $F$ has no Landau-Siegel zero. We may then apply \cite[Theorem~1]{Banks} to deduce that $L(s,\f\otimes\f)=\zeta_{F}(s)L(s,\mathrm{sym}^{2}\f)$ does not admit a Landau-Siegel zero either. Hence, we  get the following lower bound \begin{equation*}\res_{s=1}L(s,\f\otimes\f)\gg (\log k)^{-1},\end{equation*}  thanks to the ground-breaking work of Hoffstein-Lockhart \cite{HL} (namely, Proposition 1.1 therein) and its appendix \cite{GHL}. Using this bound results in the desired upper bound on $\omega_{\f}$.
%
 %
\end{proof}
Thus, we conclude that
\[\sum_{\f\in\Pi_k(\ringO_{F})}\left|L\left(\frac12,\f\otimes\g\right)\right|\gg k,\]
as claimed in Proposition \ref{prop:main}.

\vskip .2in

\section{Proof of Lemma~\ref{lem:first}}\label{sec:lem}

This section is devoted to proving Lemma~\ref{lem:first}. For the first statement, we only sketch a proof since the desired asymptotic formula for $M_{\g}(k)$ is established with an argument very similar to \cite[Section 3]{2016RAMA}. 
Writing $V_{1/2}(y)$ as in (\ref{eqn:V}), we get
\[
M_\g(k)=\frac{1}{2\pi i}\int_{(\frac{3}{2})}\left(\frac{4^n\pi^{2n}}{\norm(\d_F^2\n)}\right)^{-u}e^{u^2}\frac{L_\infty(1/2+u,\f\otimes\g)}{L_\infty(1/2,\f\otimes\g)}\zeta_{F}^{\n}(2u+1)\;\frac{du}{u}.
\]
Since shifting the contour of integration to $\Re(u)=-\frac12+\epsilon$ gives
\begin{align*}\sum_{d=1}^{\infty}\frac{a_{d}(\n)}{d}V_{1/2}\left(\frac{4^n\pi^{2n}d^2}{\norm(\d_F^2\n)}\right)&=\res_{u=0}\left(\left(\frac{4^n\pi^{2n}}{\norm(\d_F^2\n)}\right)^{-u}e^{u^2}\frac{L_\infty(1/2+u,\f\otimes\g)}{L_\infty(1/2,\f\otimes\g)}\frac{\zeta_{F}^{\n}(2u+1)}{u}\right)\\
&\hspace{.4in}+\frac{1}{2\pi i}\int_{(-\frac{1}{2}+\epsilon)}\left(\frac{4^n\pi^{2n}}{\norm(\d_F^2\n)}\right)^{-u}e^{u^2}\frac{L_\infty(1/2+u,\f\otimes\g)}{L_\infty(1/2,\f\otimes\g)}\zeta_{F}^{\n}(2u+1)\;\frac{du}{u},\end{align*}
we need only compute the residue at $u=0$ and bound the integral. Using Stirling's formula, one can estimate that the integral above is $O(k^{-1/2+\epsilon})$, whereas the residue is equal to $\gamma_{-1}^{\n}(F)\log(k)$ plus an explicit constant that depends only on $F$ and $\n$.

Let us now prove the second statement. First, notice that it suffices to consider the partial sum $E_{\g,\nu}(k)$ given by 
\begin{align}\label{eqn:partial-error}
E_{\g,\nu}(k)&=2C\sum_{\alpha\in(\t_\nu^{-1})^{+}/\ringO_{F}^{\times +}} \frac{C_{\g}(\alpha\t_\nu)}{\sqrt{\norm(\alpha\t_\nu)}}\sum_{d=1}^{\infty}\frac{a_{d}(\n)}{d}V_{1/2}\left(\frac{4^n\pi^{2n}\norm(\alpha\t_\nu)d^2}{\norm(\d_F^2\n)}\right)\\ \nonumber 
&\hspace{.6in}\times\sum_{ c\in\t_\mu^{-1}\backslash\{0\}}\frac{{\mathit Kl}(\alpha,\t_\nu; 1,\ringO_F;c,\t_\mu)}{\norm(c\t_\mu)}J_{k-1}\left(\frac{4\pi\sqrt{\alpha\left[\t_\nu\t_\mu^{-2}\right]}}{|c|}\right),
\end{align} 
for any ideal class representative $\t_{\nu}$, while fixing an ideal class representative $\t_{\mu}$ such that $\t_{\mu}^{2}\sim\t_{\nu}$ and ignoring the (finite) sum over $\epsilon$.
 
In the following computations, we use the estimates
 \begin{equation}\label{J-Bessel0}J_{v}(x)\ll \left(\frac{ex}{2(v+1)}\right)^{M-\delta}\quad\quad 0\leq\delta<1\quad\text{and}\quad \delta<M\leq v, \end{equation} 
and
 \begin{equation}\label{J-Bessel00}J_{v}(x)\ll x^{-\frac{1}{2}+\omega}\quad\quad 0\leq\omega<1/2.\end{equation}
Moreover, we need the Weil bound for the Kloosterman sum in the number field setting. This is given by 
\begin{equation}\label{weilbound}|\mathit{Kl}(\alpha,\n;\beta,\m;c,\c)|\ll_{F}\norm\left(((\alpha)\n,(\beta)\m,(c)\c)\right)^{\frac{1}{2}}\tau((c)\c)\norm(c\c)^{\frac{1}{2}},
\end{equation} where $(\a,\b,\c)$ is the gcd of the ideals $\a$, $\b$, $\c$, and $\tau((c)\c)=|\{I\subset \ringO_{F}:(c)\c I^{-1}\subset\ringO_{F}\}|$. 

We rewrite $E_{\g,\nu}(k)$ as
\begin{eqnarray*} E_{\g,\nu}(k)&=&2C\sum_{d=1}^{\infty}\frac{a_{d}(\n)}{d}\sum_{\alpha\in(\t_\nu^{-1})^{+}/\ringO_{F}^{\times +}} \frac{C_{\g}(\alpha\t_\nu)}{\sqrt{\norm(\alpha\t_\nu)}}V_{1/2}\left(\frac{4^n\pi^{2n}\norm(\alpha\t_\nu)d^2}{\norm(\d_F^2\n)}\right)\\
&& \hspace{.2in} \times \sum_{c\in\t_{\mu}^{-1}\backslash\{0\}/\ringO_{F}^{\times +}}\sum_{\eta\in\ringO_{F}^{\times+}}\frac{{\mathit Kl}(\alpha,\t_\nu; 1,\ringO_F;c\eta^{-1},\t_\mu)}{\norm(c\t_\mu)}J_{k-1}\left(\frac{4\pi\eta\sqrt{\alpha\left[\t_\nu\t_\mu^{-2}\right]}}{|c|}\right).
\end{eqnarray*} 
Let $\epsilon>0$ be given. We may truncate the inner sum $\left(\displaystyle{\sum_{\alpha\in(\t_\nu^{-1})^{+}/\ringO_{F}^{\times +}}}\right)$ at $\norm(\alpha\t_{\nu})\ll d^{-2}k^{1+\epsilon}$ with a very small error. To verify this fact, we consider
\begin{eqnarray*} E_{\g,\nu}^{*}(k)&=&2C\sum_{d=1}^{\infty}\frac{a_{d}(\n)}{d}\sum_{\substack{\alpha\in(\t_{\nu}^{-1})^{+}/ \ringO_{F}^{\times +}\\\norm(\alpha\t_{\nu})\gg d^{-2}k^{1+\epsilon}}}  \frac{C_{\g}(\alpha\t_\nu)}{\sqrt{\norm(\alpha\t_\nu)}}V_{1/2}\left(\frac{4^n\pi^{2n}\norm(\alpha\t_\nu)d^2}{\norm(\d_F^2\n)}\right)\\
&& \hspace{.2in} \times \sum_{c\in\t_{\mu}^{-1}\backslash\{0\}/\ringO_{F}^{\times +}}\sum_{\eta\in\ringO_{F}^{\times+}}\frac{{\mathit Kl}(\alpha,\t_\nu; 1,\ringO_F;c\eta^{-1},\t_\mu)}{\norm(c\t_\mu)}J_{k-1}\left(\frac{4\pi\eta\sqrt{\alpha\left[\t_\nu\t_\mu^{-2}\right]}}{|c|}\right).
\end{eqnarray*}
Applying the first estimate in \eqref{V-estimates} and the Weil bound \eqref{weilbound}, together with the well-known estimate $\tau(\m)\ll_\epsilon\norm(\m)^\epsilon$, yields

  \begin{align}\label{eqn:error-tail} 
  E_{\g,\nu}^{*}(k)&\ll\sum_{d=1}^{\infty}\frac{a_{d}(\n)}{d}\sum_{\substack{\alpha\in(\t_{\nu}^{-1})^{+}/ \ringO_{F}^{\times +}\\\norm(\alpha\t_{\nu})\gg d^{-2}k^{1+\epsilon}}} \frac{\left|C_{g}(\alpha\t_{\nu})\right|}{\sqrt{\norm(\alpha\t_{\nu})}}\left(\frac{k}{\norm(\alpha\t_{\nu})d^2}\right)^A\\& \hspace{.2in} \times \left\{\sum_{\substack{c\in\t_{\mu}^{-1}\backslash\{0\}/\ringO_{F}^{\times +}\\|\norm(c\t_{\mu})|\ll\sqrt{\norm(\alpha\t_{\nu})}}}\sum_{\eta\in\ringO_{F}^{\times+}}|\norm(c\t_{\mu})|^{\epsilon-\frac{1}{2}}\prod_{j=1}^n\left|J_{k_j-1}\left(\frac{4\pi\eta_{j}\sqrt{\alpha_{j}\left[\t_\nu\t_\mu^{-2}\right]_{j}}}{|c_{j}|}\right)\right|\right.\nonumber\\
&\qquad \left. {} +\sum_{\substack{c\in\t_{\mu}^{-1}\backslash\{0\}/\ringO_{F}^{\times +}\\|\norm(c\t_{\mu})|\gg\sqrt{\norm(\alpha\t_{\nu})}}}\sum_{\eta\in\ringO_{F}^{\times+}}|\norm(c\t_{\mu})|^{\epsilon-\frac{1}{2}}\prod_{j=1}^n\left|J_{k_j-1}\left(\frac{4\pi\eta_{j}\sqrt{\alpha_{j}\left[\t_\nu\t_\mu^{-2}\right]_{j}}}{|c_{j}|}\right)\right|\right\}\nonumber.\end{align} 
When $|\norm(c\t_{\mu})|\ll\sqrt{\norm(\alpha\t_{\nu})}$, we use the estimate (\ref{J-Bessel00}) with $\omega_j=0$ when $|\eta_{j}|\geq1$ and $\omega_{j}=\omega$ for some fixed $\omega\in(0,1/2)$ otherwise.  We also observe that, for any $a\in F$, there exists a totally positive unit $u$ such that $\norm(a)^{1/n} \ll (au)_j\ll \norm(a)^{1/n}$ for all $j$ (see \cite[Lemma~2.1]{trotabas}). It follows that
\begin{align*}
\prod_{j=1}^nJ_{k_j-1}\left(\frac{4\pi\eta_{j}\sqrt{\alpha_{j}\left[\t_\nu\t_\mu^{-2}\right]_{j}}}{|c_{j}|}\right)&\ll\prod_{j=1}^n\left(\frac{\eta_{j}\sqrt{\alpha_j\left[\t_{\nu}\t_{\mu}^{-2}\right]_j}}{|c_j|}\right)^{-\frac{1}{2}+\omega_{j}}\\ \nonumber
&\ll|\norm(c\t_{\mu})|^{\frac{1}{2}}\sqrt{\norm(\alpha\t_{\nu})}^{-\frac{1}{2}+\omega}\prod_{|\eta_j|<1}|\eta_j|^{\omega}.\end{align*}
When $|\norm(c\t_{\mu})|\gg\sqrt{\norm(\alpha\t_{\nu})}$, we use the estimate (\ref{J-Bessel0})  with $\delta_j$ being chosen as $\delta_j=0$ whenever $|\eta_{j}|\leq1$ and $\delta_{j}=\delta$ for some fixed $\delta\in (0,1)$ otherwise, and $M\leq \min_j\{k_j-1\}$. Once again, we get 
 \begin{align*}
 \prod_{j=1}^nJ_{k_j-1}\left(\frac{4\pi\eta_{j}\sqrt{\alpha_{j}\left[\t_\nu\t_\mu^{-2}\right]_{j}}}{|c_{j}|}\right)&\ll\prod_{j=1}^n\left(\frac{2e\pi\eta_{j}\sqrt{\alpha_j\left[\t_{\nu}\t_{\mu}^{-2}\right]_j}}{k_j |c_j|}\right)^{M}\prod_{j=1}^n\left(\frac{2e\pi\eta_{j}\sqrt{\alpha_j\left[\t_{\nu}\t_{\mu}^{-2}\right]_j}}{k_j |c_j|}\right)^{-\delta_j}\\\nonumber
 &\ll k^{\delta-M}\sqrt{\norm(\alpha\t_{\nu})}^{M}|\norm(c\t_{\mu})|^{\delta-M}\prod_{|\eta_j|>1}|\eta_j|^{-\delta}.\end{align*}
Applying these estimates for the $J$-Bessel functions in \eqref{eqn:error-tail} allows us to factor out the sums over totally positive units as 
$$\sum_{\eta\in\ringO_{F}^{\times+}}\prod_{|\eta_j|<1}|\eta_j|^{\omega}\quad\quad\text{and}\quad\quad \sum_{\eta\in\ringO_{F}^{\times+}}\prod_{|\eta_j|>1}|\eta_j|^{-\delta}.$$ 
These sums are both convergent by virtue of the crucial observation made by Luo in \cite[page~36]{luo1}.
Hence, 
\begin{align*} E_{\g,\nu}^{*}(k)&\ll\sum_{d=1}^{\infty}\frac{a_{d}(\n)}{d}\sum_{\substack{\alpha\in(\t_{\nu}^{-1})^{+}/ \ringO_{F}^{\times +}\\\norm(\alpha\t_{\nu})\gg d^{-2}k^{1+\epsilon}}} \frac{\left|C_{g}(\alpha\t_{\nu})\right|}{\sqrt{\norm(\alpha\t_{\nu})}}\left(\frac{k}{\norm(\alpha\t_{\nu})d^2}\right)^A\\
& \hspace{.2in} \times\left\{ \sum_{\substack{c\in\t_{\mu}^{-1}\backslash\{0\}/\ringO_{F}^{\times +}\\|\norm(c\t_{\mu})|\ll\sqrt{\norm(\alpha\t_{\nu})}}}|\norm(c\t_{\mu})|^{\epsilon} \sqrt{\norm(\alpha\t_{\nu})}^{-\frac{1}{2}+\omega}\right.\\
&\hspace{.6in} \left. {}+k^{\delta-M}\sum_{\substack{c\in\t_{\mu}^{-1}\backslash\{0\}/\ringO_{F}^{\times +}\\|\norm(c\t_{\mu})|\gg\sqrt{\norm(\alpha\t_{\nu})}}}|\norm(c\t_{\mu})|^{\delta-M-\frac12+\epsilon}\sqrt{\norm(\alpha\t_{\nu})}^{M}\right\},\end{align*} 
and therefore,
\begin{align*} E_{\g,\nu}^{*}(k)&\ll k^A\sum_{d=1}^{\infty}\frac{a_{d}(\n)}{d^{1+2A}}\sum_{\substack{\alpha\in(\t_{\nu}^{-1})^{+}/ \ringO_{F}^{\times +}\\\norm(\alpha\t_{\nu})\gg d^{-2}k^{1+\epsilon}}} {\sqrt{\norm(\alpha\t_{\nu})}^{\omega-2A-\frac32+\epsilon}} \sum_{\substack{c\in\t_{\mu}^{-1}\backslash\{0\}/\ringO_{F}^{\times +}\\|\norm(c\t_{\mu})|\ll\sqrt{\norm(\alpha\t_{\nu})}}}|\norm(c\t_{\mu})|^{\epsilon} \\
& \hspace{0.2in}+k^{A+\delta-M} \sum_{d=1}^{\infty}\frac{a_{d}(\n)}{d^{1+2A}}\sum_{\substack{\alpha\in(\t_{\nu}^{-1})^{+}/ \ringO_{F}^{\times +}\\\norm(\alpha\t_{\nu})\gg d^{-2}k^{1+\epsilon}}} {\sqrt{\norm(\alpha\t_{\nu})}^{M-2A-1+\epsilon}}\sum_{\substack{c\in\t_{\mu}^{-1}\backslash\{0\}/\ringO_{F}^{\times +}\\|\norm(c\t_{\mu})|\gg\sqrt{\norm(\alpha\t_{\nu})}}}|\norm(c\t_{\mu})|^{\delta-M-\frac12+\epsilon}\\
&\ll k^{-2018}.
\end{align*}

 
Let us now estimate 
 \begin{eqnarray*} E_{\g,\nu}^{m}(k)&=&2C\sum_{d=1}^{\infty}\frac{a_{d}(\n)}{d}\sum_{\substack{\alpha\in(\t_{\nu}^{-1})^{+}/ \ringO_{F}^{\times +}\\\norm(\alpha\t_{\nu})\ll d^{-2}k^{1+\epsilon}}}  \frac{C_{\g}(\alpha\t_\nu)}{\sqrt{\norm(\alpha\t_\nu)}}V_{1/2}\left(\frac{4^n\pi^{2n}\norm(\alpha\t_\nu)d^2}{\norm(\d_F^2\n)}\right)\\
&& \hspace{.2in} \times \sum_{c\in\t_{\mu}^{-1}\backslash\{0\}/\ringO_{F}^{\times +}}\sum_{\eta\in\ringO_{F}^{\times+}}\frac{{\mathit Kl}(\alpha,\t_\nu; 1,\ringO_F;c\eta^{-1},\t_\mu)}{\norm(c\t_\mu)}J_{k-1}\left(\frac{4\pi\eta\sqrt{\alpha\left[\t_\nu\t_\mu^{-2}\right]}}{|c|}\right).\end{eqnarray*}
To study this sum, we replace the expression $J_{k-1}(*)$ by the product of classical $J$-Bessel functions given by their Mellin-Barnes integral representation. More precisely, we write 
 \[J_{k-1}\left(\frac{4\pi\eta\sqrt{\alpha\left[\t_{\nu}\t_{\mu}^{-2}\right]}}{|c|}\right)=\prod_{j=1}^{n}J_{k_{j}-1}\left(\frac{4\pi\eta_{j}\sqrt{\alpha_{j}\left[\t_{\nu}\t_{\mu}^{-2}\right]_{j}}}{|c_{j}|}\right),\] where for each $j$ we have
\begin{equation}\label{eqn:j-bessel}
J_{k_{j}-1}(x_{j})=\int_{(\sigma_{j})}\frac{\Gamma\left(\frac{k_{j}-1-s_{j}}{2}\right)}{\Gamma\left(\frac{k_{j}-1+s_{j}}{2}+1\right)}\left(\frac{x_{j}}{2}\right)^{s_{j}}\;ds_{j}\quad\quad \text{with}\;\;0<\sigma_{j}<k_{j}-1.
\end{equation}

 We also express the Kloosterman sum explicitly as follows. 
 For $\alpha\in\t_{\nu}^{-1}$ and $c\in\t_{\mu}^{-1}$, the Kloosterman sum $\mathit{Kl}(\alpha,\t_{\nu};1,\ringO_{F};c\eta^{-1},\t_{\mu})$ is given by 

\begin{equation*}\label{eqn:kloosterman}
{\mathit Kl}(\alpha,\t_{\nu}; 1,\ringO_{F};c\eta^{-1},\t_{\mu})=\sum_{x\in \left(\t_{\nu}\d_{F}^{-1}\t_{\mu}^{-1}/\t_{\nu}\d_{F}^{-1}c\right)^{\times}}\exp\left(2\pi i\trace\left(\frac{\alpha x+\left[\t_{\nu}\t_{\mu}^{-2}\right]\overline{x}}{c\eta^{-1}}\right)\right).
\end{equation*}
Here $\overline{x}$ is the unique element in $\left(\t_{\nu}^{-1}\d_{F}\t_{\mu}/\t_{\nu}^{-1}\d_{F}c\t_{\mu}^{2}\right)^{\times}$ such that $x\overline{x}\equiv 1\mod c\t_{\mu}$. The reader is referred to \cite[Section~2.2]{trotabas} for more details on this construction. 

Opening up the $J$-Bessel functions and the Kloosterman sum in $E_{g,\nu}^{m}(k)$  yields
\begin{eqnarray*} E_{\g,\nu}^{m}(k)&=&2C\sum_{d=1}^{\infty}\frac{a_{d}(\n)}{d}\sum_{\substack{\alpha\in(\t_{\nu}^{-1})^{+}/ \ringO_{F}^{\times +}\\\norm(\alpha\t_{\nu})\ll d^{-2}k^{1+\epsilon}}}  \frac{C_{\g}(\alpha\t_\nu)}{\sqrt{\norm(\alpha\t_\nu)}}V_{1/2}\left(\frac{4^n\pi^{2n}\norm(\alpha\t_\nu)d^2}{\norm(\d_F^2\n)}\right) \sum_{c\in\t_{\mu}^{-1}\backslash\{0\}/\ringO_{F}^{\times +}}\frac{1}{\norm(c\t_\mu)}\\
&& \hspace{.2in} \times\sum_{\eta\in\ringO_{F}^{\times+}}\sum_{x\in \left(\t_{\nu}\d_{F}^{-1}\t_{\mu}^{-1}/\t_{\nu}\d_{F}^{-1}c\right)^{\times}}\exp\left(2\pi i\trace\left(\frac{\alpha x+\left[\t_{\nu}\t_{\mu}^{-2}\right]\overline{x}}{c\eta^{-1}}\right)\right)\\
&& \hspace{1.3in} \times\int_{(\sigma)}\frac{\Gamma\left(\frac{k-1-s}{2}\right)}{\Gamma\left(\frac{k-1+s}{2}+1\right)}\left(\frac{2\pi\eta\sqrt{\alpha\left[\t_{\nu}\t_{\mu}^{-2}\right]}}{|c|}\right)^{s}\;ds.\end{eqnarray*}
We note here that multi-index notation is applied again in the integral representation of the $J$-Bessel function. Indeed, 
$\displaystyle{\int_{(\sigma)} \; ds}$ denotes the multiple integration $\displaystyle{\int_{(\sigma_1)}\cdots\int_{(\sigma_n)}}\; ds_n\cdots ds_1$ with  $\sigma_{j}=\sigma+\delta_{j}$ and $\delta_j=\begin{cases}0&\text{if}\;\eta_{j}\geq1\\\delta_{0}&\text{otherwise}\end{cases}$  for some fixed $\sigma>1$ and sufficiently small $\delta_{0}>0$. 
Upon interchanging summations and integration in the expression above, we get 
\begin{eqnarray*} E_{\g,\nu}^{m}(k)&=&2C\sum_{\eta\in\ringO_{F}^{\times+}}\int_{(\sigma)}\frac{\Gamma\left(\frac{k-1-s}{2}\right)}{\Gamma\left(\frac{k-1+s}{2}+1\right)}\left(2\pi\eta\sqrt{\left[\t_{\nu}\t_{\mu}^{-2}\right]}\right)^{s} \sum_{c\in\t_{\mu}^{-1}\backslash\{0\}/\ringO_{F}^{\times +}}\frac{1}{\norm(c\t_\mu)|c|^{s}}\sum_{d=1}^{\infty}\frac{a_{d}(\n)}{d} \\
&& \hspace{.2in}\times\sum_{x\in \left(\t_{\nu}\d_{F}^{-1}\t_{\mu}^{-1}/\t_{\nu}\d_{F}^{-1}c\right)^{\times}} \sum_{\substack{\alpha\in(\t_{\nu}^{-1})^{+}/ \ringO_{F}^{\times +}\\\norm(\alpha\t_{\nu})\ll d^{-2}k^{1+\epsilon}}}  \frac{\alpha^{s/2}C_{\g}(\alpha\t_\nu)}{\sqrt{\norm(\alpha\t_\nu)}}V_{1/2}\left(\frac{4^n\pi^{2n}\norm(\alpha\t_\nu)d^2}{\norm(\d_F^2\n)}\right)\\
&& \hspace{.2in}\times\exp\left(2\pi i\trace\left(\frac{\left[\t_{\nu}\t_{\mu}^{-2}\right]\overline{x}}{c\eta^{-1}}\right)\right)\exp\left(2\pi i\trace\left(\frac{\alpha x}{c\eta^{-1}}\right)\right)\;ds.\end{eqnarray*}

 Let us now analyze the internal sum
\begin{eqnarray*} R_{\g,\nu}(k;\eta,s)&=&\sum_{c\in\t_{\mu}^{-1}\backslash\{0\}/\ringO_{F}^{\times +}}\frac{1}{\norm(c\t_\mu)|c|^{s}}\sum_{d=1}^{\infty}\frac{a_{d}(\n)}{d}\sum_{x} \sum_{\substack{\alpha\in(\t_{\nu}^{-1})^{+}/ \ringO_{F}^{\times +}\\\norm(\alpha\t_{\nu})\ll d^{-2}k^{1+\epsilon}}}  \frac{\alpha^{s/2}C_{\g}(\alpha\t_\nu)}{\sqrt{\norm(\alpha\t_\nu)}}\\
  && \hspace{0.6in}\times V_{1/2}\left(\frac{4^n\pi^{2n}\norm(\alpha\t_\nu)d^2}{\norm(\d_F^2\n)}\right)\exp\left(2\pi i\trace\left(\frac{\left[\t_{\nu}\t_{\mu}^{-2}\right]\overline{x}}{c\eta^{-1}}\right)\right)\exp\left(2\pi i\trace\left(\frac{\alpha x}{c\eta^{-1}}\right)\right)
\end{eqnarray*}
where the sum $\sum_x$ runs over all $x\in \left(\t_{\nu}\d_{F}^{-1}\t_{\mu}^{-1}/\t_{\nu}\d_{F}^{-1}c\right)^{\times}$. 
Applying Cauchy-Schwarz inequality yields
  \begin{eqnarray*}
R_{\g,\nu}(k;\eta,s) &\ll& \sum_{c\in\t_{\mu}^{-1}\backslash\{0\}/\ringO_{F}^{\times +}}\frac{1}{\left|\norm(c\t_\mu)\right||c|^{\Re{s}}}\sum_{d=1}^{\infty}\frac{a_{d}(\n)}{d} \left(\sum_x\left|\exp\left(2\pi i\trace\left(\frac{\left[\t_{\nu}\t_{\mu}^{-2}\right]\overline{x}}{c\eta^{-1}}\right)\right)\right|^2\right)^{1/2} \\
  &&\times \left(\sum_{x}\left|\sum_{\substack{\alpha\in(\t_{\nu}^{-1})^{+}/ \ringO_{F}^{\times +}\\ \norm(\alpha\t_{\nu})\ll d^{-2}k^{1+\epsilon}}}  \frac{\alpha^{s/2}C_{\g}(\alpha\t_\nu)}{\sqrt{\norm(\alpha\t_\nu)}}V_{1/2}\left(\frac{4^n\pi^{2n}\norm(\alpha\t_\nu)d^2}{\norm(\d_F^2\n)}\right)\exp\left(2\pi i\trace\left(\frac{\alpha x}{c\eta^{-1}}\right)\right)\right|^2\right)^{\frac12}.
  \end{eqnarray*}
Next, we employ the additive large sieve inequality (see \cite[page~178]{iwaniec-kowalski})
 \[\sum_{d\in \mathbb{Z}^{n}/c\mathbb{Z}^{n}}\left|\sum_{\substack{v\in\mathbb{Z}^{n}\\v_{j}\leq X}}y_{v}\exp\left(2\pi i\frac{d.v}{c}\right)\right|^2\ll (\norm(c)+X^{n})\sum_{\substack{v\in\mathbb{Z}^n\\v_{j\leq X}}}|y_v|^2.\]
As a result, we get
\begin{eqnarray*}
R_{\g,\nu}(k;\eta,s)&\ll& \sum_{c\in\t_{\mu}^{-1}\backslash\{0\}/\ringO_{F}^{\times +}}\frac{(\left|\norm(c\t_{\mu})\right|+k^{1+\epsilon})^{\frac{1}{2}}}{\left|\norm(c\t_\mu)\right|^{1/2}|c|^{\Re{s}}}\sum_{d=1}^\infty \frac{a_d(\n)}{d} \\
&&\hspace{1in}\times\left(\sum_{\substack{\alpha\in(\t_{\nu}^{-1})^{+}/ \ringO_{F}^{\times +}\\ \norm(\alpha\t_{\nu})\ll d^{-2}k^{1+\epsilon}}}  \left|\frac{\alpha^{s/2}C_{\g}(\alpha\t_\nu)}{\sqrt{\norm(\alpha\t_\nu)}}V_{1/2}\left(\frac{4^n\pi^{2n}\norm(\alpha\t_\nu)d^2}{\norm(\d_F^2\n)}\right)\right|^2\right)^{\frac12}\\
&\ll&\sum_{c\in\t_{\mu}^{-1}\backslash\{0\}/\ringO_{F}^{\times +}}\frac{(\left|\norm(c\t_{\mu})\right|+k^{1+\epsilon})^{\frac{1}{2}}}{\left|\norm(c\t_\mu)\right|^{1/2}|c|^{\Re{s}}}\sum_{d=1}^\infty \frac{a_d(\n)}{d}\\
&&\hspace{1in} \times \left(\sum_{\substack{\alpha\in(\t_{\nu}^{-1})^{+}/ \ringO_{F}^{\times +}\\\norm(\alpha\t_{\nu})\ll d^{-2}k^{1+\epsilon}}}  \frac{\alpha^{\Re{s}}\left|C_{\g}(\alpha\t_\nu)\right|^2}{\norm(\alpha\t_\nu)}\left|V_{1/2}\left(\frac{4^n\pi^{2n}\norm(\alpha\t_\nu)d^2}{\norm(\d_F^2\n)}\right)\right|^2\right)^{\frac12}.
\end{eqnarray*}
This can be further majorized by 
\[\sum_{c\in\t_{\mu}^{-1}\backslash\{0\}/\ringO_{F}^{\times +}}\frac{(\left|\norm(c\t_{\mu})\right|+k^{1+\epsilon})^{\frac{1}{2}}}{\left|\norm(c\t_\mu)\right|^{1/2}|c|^{\Re{s}}}\left(\sum_{\substack{\alpha\in(\t_{\nu}^{-1})^{+}/ \ringO_{F}^{\times +}\\\norm(\alpha\t_{\nu})\ll k^{1+\epsilon}}}  \norm(\alpha\t_{\nu})^{\epsilon-1+\sigma+\delta_0}\right)^{\frac12}\ll k^{(1+\epsilon)\frac{\sigma+\delta_0+1+\epsilon}{2}},\]
%
which allows us to summarize
 \[E_{\g,\nu}^{m}(k)\ll k^{(1+\epsilon)\frac{\sigma+\delta_0+1+\epsilon}{2}}\left|\sum_{\eta\in\ringO_{F}^{\times+}}\int_{(\sigma)}\frac{\Gamma\left(\frac{k-1-s}{2}\right)}{\Gamma\left(\frac{k-1+s}{2}+1\right)}\left(2\pi\eta\sqrt{\left[\t_{\nu}\t_{\mu}^{-2}\right]}\right)^{s}\; ds \right|.\]
 We now write $s_{j}=\sigma+\delta_{j}+it_{j}$. For ease of notation, we put $dt=dt_{1}\cdots dt_{n}$ and $\delta=(\delta_1,\dots,\delta_n)$. Hence, 
\begin{align*}
E_{\g,\nu}^{m}(k)&\ll k^{(1+\epsilon)\frac{\sigma+\delta_0+1+\epsilon}{2}}\sum_{\eta\in\ringO_{F}^{\times+}}\eta^{\sigma+\delta}\int_{-\infty}^{\infty}\left|\frac{\Gamma\left(\frac{k-1-\sigma-\delta-it}{2}\right)}{\Gamma\left(\frac{k-1+\sigma+\delta+it}{2}+1\right)}\right|\;dt
\\&\ll k^{(1+\epsilon)\frac{\sigma+\delta_0+1+\epsilon}{2}}\sum_{\eta\in\ringO_{F}^{\times+}}\prod_{\substack{j=1\\ \eta_{j}<1}}^{n}\eta_{j}^{\delta_0}\int_{-\infty}^{\infty}\left|\frac{\Gamma\left(\frac{k_j-1-\sigma-\delta_0-it_j}{2}\right)}{\Gamma\left(\frac{k_j-1+\sigma+\delta_0+it_j}{2}+1\right)}\right|\;dt_{j}\\
&\hspace{.9in}\times\prod_{\substack{j=1\\\eta_{j}\geq1}}^{n}\int_{-\infty}^{\infty}\left|\frac{\Gamma\left(\frac{k_j-1-\sigma-it_j}{2}\right)}{\Gamma\left(\frac{k_j-1+\sigma+it_j}{2}+1\right)}\right|\;dt_{j}.
\end{align*} 
Using the estimate
\[ \frac{\Gamma(A+c+it)}{\Gamma(A+it)}\ll |A+it|^c\]
for $A>0$ and a real constant $c$ with $|c|<A/2$ (see \cite[Lemma~1]{ganguly-hoffstein-sengupta}), we get 
\begin{align*}
E_{\g,\nu}^{m}(k)&\ll k^{(1+\epsilon)\frac{\sigma+\delta_0+1+\epsilon}{2}}\sum_{\eta\in\ringO_{F}^{\times+}}\prod_{\substack{j=1\\ \eta_{j}<1}}^{n}\eta_{j}^{\delta_0}\int_{-\infty}^{\infty}|k_j+1+\sigma+\delta_{0}+it_{j}|^{-1-\sigma-\delta_0}\;dt_{j}\\
&\hspace{.9in}\times\prod_{\substack{j=1\\\eta_{j}\geq1}}^{n}\int_{-\infty}^{\infty}|k_j+1+\sigma+it_{j}|^{-1-\sigma}\;dt_{j}\\&\ll k^{(1+\epsilon)\frac{\sigma+\delta_0+1+\epsilon}{2}-\sigma}.
\end{align*} 
By choosing $\epsilon$ and $\delta_0$ sufficiently small, we obtain $E_{\g,\nu}^{m}(k)=o(1)$ as $k\to\infty$.

\vskip .2in

\section{Second Moment}\label{sec:second}

The aim of this section is to prove the second statement in Proposition~\ref{prop:main}; namely, 
\[\sum_{\f\in\Pi_k(\ringO_F)}\left|L\left(\frac12,\f\otimes\g\right)\right|^2\ll k(\log k)^{c}.\]
We apply the Rankin-Selberg unfolding method employed by Blomer \cite[pp.~612]{blomer}. To this end, we define the Eisenstein series
\[ E_\nu(z, s)=\sum_{\substack{(c,d)\ringO_F^\times \\ (c,d)\in \t_\nu\n\d_F\times \ringO_F}}\frac{y^s}{|cz+d|^{2s}},\]
for $\nu=1,\dots,h$ and $z\in\h^{n}$. Here, the sum is taken over the $\ringO_F^\times$-equivalence classes of $(c, d)\neq (0, 0)$ where $(c, d)\sim_{\ringO_F^\times}(c',d')$ if $c'=\xi c$ and $d'=\xi d$ for some $\xi\in \ringO_F^\times$. This series is convergent for $\Re(s)>1$. Set \begin{equation}E_\nu^{*}(z, s)=d_{F}^{s}\pi^{-ns}\Gamma^{n}(s)E_\nu(z, s).\end{equation} 
As a function of $s$, the completed Eisenstein series $E^{*}_\nu(z, s)$ can be continued to a meromorphic function on the whole plane with two simple poles at $s=0$ and $s=1$. Standard computations (see for example Freitag \cite[pp.~170]{freitag}) yield the following Fourier expansion 
\begin{align}\label{eqn:eisensteinfourier}
E^{*}_\nu(z, s)&= y^{s}\zeta^{*}_{F,\ringO_{F}}(2s)+y^{1-s}\norm(\t_{\nu}\n\d_{F})^{1-2s}\zeta^{*}_{F,\t_{\nu}\n\d_{F}}(2s-1)\\
&\hspace{.2in} +2^ny^{\frac{1}{2}}\sum_{\substack{\mu\in\t_{\nu}\n/\ringO_{F}^{\times}\\ \mu\neq0}}\norm((\mu)\d_F)^{s-\frac12}\sigma_{1-2s}((\mu)\d_F)\prod_{j=1}^{n}K_{s-\frac12}\left(2\pi|\mu_{j}|y_{j}\right)\exp(2\pi i\trace(\mu x)), \nonumber
\end{align}
where $\zeta^{*}_{F,\ringO_{F}}$ and $\zeta^{*}_{F,\t_{\nu}\n\d_{F}}$ are the completed partial Dedekind zeta functions given by 
\[\zeta^{*}_{F,\a}(s)=d_{F}^{\frac{s}{2}}\pi^{-\frac{ns}{2}}\Gamma^{n}\left(\frac{s}2\right)\norm(\a)^{s}\sum_{\substack{a\in\a/\ringO_{F}\\ a\neq0}}\norm(a)^{-s}.\]
Since $\dis \res_{s=1}\zeta_{F, \ringO_F}(s)=\frac{2^nR_F}{w_F\sqrt{d_F}}$ and $\dis \lim_{s\to 0} s^{1-n} \zeta_{F, \t_\nu\n\d_F}(s)=-\frac{R_F}{w_F}$, we see that the poles at $s=1/2$ from the first and second terms in (\ref{eqn:eisensteinfourier}) cancel out, and therefore we have
\[ E^*_\nu\left(z, \frac12\right)=Cy^{1/2}+2^ny^{1/2}\sum_{\substack{\mu\in \t_\nu\n/\ringO_F^\times \\  \mu\neq 0}}\tau((\mu)\d_F)\prod_{j=1}^n K_0(2\pi|\mu_j|y_j)\exp(2\pi i\trace(\mu x)).\]
As a result, we obtain the estimate
\begin{equation}\label{eqn:eisensteinbound}
E^{*}_\nu\left(z, \frac12\right)\ll y^{\frac{1}{2}}
\end{equation}
for any $z$ in some fundamental domain for $\Gamma_\nu(\n)\backslash\h^n$.

 For a fixed form $\g=(g_1,\dots, g_h)$, we put $\dis v_\nu=v_\nu(z,s)=\norm(\t_\nu)^sg_\nu(z)E_\nu(z, s)$ and $v=(v_1,\dots, v_h)$. Our next goal is to obtain upper and lower bounds for $\| v(z,1/2)\|^2$. 
%
%
Using the definition of inner products given in (\ref{eqn:petersson}), we have 
\[ \|v(z,s)\|^2=\sum_{\nu=1}^h\| v_\nu(z,s)\|^2=\sum_{\nu=1}^{h}\frac{1}{\mu(\Gamma_\nu(\n)\backslash\h^n)}\int_{\Gamma_\nu(\n)\backslash \h^n}|v_\nu(z,s)|^2y^k\, d\mu(z).\]
We note that the measure $\mu(\Gamma_\nu(\n)\backslash \h^n)$ of a fundamental domain for $\Gamma_\nu(\n)\backslash\h^n$ (with respect to $d\mu(z)$) can be written as
\begin{equation}\label{eqn:mu}
 \mu(\Gamma_\nu(\n)\backslash \h^n)=2\pi^{-n} d_F^{3/2}\zeta_F(2)[\ringO_{F}^{\times +}:\ringO_F^{\times 2}]^{-1}\norm(\n)\prod_{\p|\n}(1+\norm(\p)^{-1}).
 \end{equation}
It is obvious from (\ref{eqn:mu}) that $\mu(\Gamma_\nu(\n)\backslash\h^n)$ is independent of $\nu$ which is why we denote it by $M_\n$ hereafter.

Taking $s=\frac12$ in the expression above and using \eqref{eqn:eisensteinbound} yield
\begin{align}\label{eqn:eisenstein}
\left\|v\left(z,\frac12\right)\right\|^2&=M_\n^{-1}\sum_{\nu=1}^{h}|\norm(\t_\nu)|\int_{\Gamma_\nu(\n)\backslash \h^n} |g_\nu(z)|^2 E_\nu\left(z,\frac12\right)^2 y^k d\mu(z)\nonumber  \\
&\ll  M_\n^{-1}\sum_{\nu=1}^{h}|\norm(\t_\nu)| \int_{\Gamma_\nu(\n)\backslash\h^n}|g_\nu(z)|^2y^{1+\frac{1}{\log k}}y^kd\mu(z) \nonumber \\
&\ll  M_\n^{-1}\sum_{\nu=1}^{h}|\norm(\t_\nu)|\int_{\Gamma_\nu(\n)\backslash\h^n}|g_\nu(z)|^2E_\nu\left(z, 1+\frac{1}{\log k}\right)y^kd\mu(z). \end{align}
Upon applying the integral representation of the Rankin-Selberg convolution, we see that \eqref{eqn:eisenstein} can be written as \begin{align*}& (4\pi)^{-\frac{1}{\log k}-k}\d_F^{-1/2}M_\n^{-1}\Gamma\left(k+\frac{1}{\log k}\right)L\left(1+\frac{1}{\log k},\g\otimes\g\right).\end{align*} Note that this follows directly from \cite[Eq~(4.32)~page~670]{shimura-duke} with very minor adjustments to account for the fact that the $L$-function normalization in \cite{shimura-duke} differs from the one used in this paper. Next we invoke \cite[Theorem~2]{CMP}, which provides upper bounds for a general class of $L$-functions at the edge of the critical strip, to get \begin{align}\label{eqn:2_upper}\left\|v\left(z,\frac12\right)\right\|^2&\ll (4\pi)^{-k}\Gamma\left(k+\frac{1}{\log k}\right)(\log k)^{c_1},
\end{align}
 for some explicit constant $c_1$ that depends only on $F$. 

On the other hand, we apply Bessel's inequality and the integral representation of the Rankin-Selberg convolution to obtain
\begin{eqnarray}\label{eqn:bessel}
\|v(z,s)\|^2&\geq &\sum_{\f\in\Pi_k(\ringO_F)}\frac{1}{\|\f\|^2}\left|\left<v(z, s), \f\right>\right|^2\nonumber\\
&=& \sum_{\f\in\Pi_k(\ringO_F)}\frac{1}{\|\f\|^2}\left|(4\pi)^{n(1-s)-k}\d_F^{-1/2}M_\n^{-1}\Gamma(s+k-1)L(s,\f\otimes\g)\right|^2\nonumber\\
&=&(4\pi)^{2n(1-s)-2k}\d_F^{-1}M_\n^{-1}|\Gamma(s+k-1)|^2\sum_{\f\in\Pi_k(\ringO_F)}\frac{1}{\|\f\|^2}|L(s,\f\otimes\g)|^2.
\end{eqnarray}
It follows from \eqref{eqn:bessel}, Proposition \ref{prop:shimura}, and \cite[Theorem~2]{CMP} that
\begin{eqnarray}\label{eqn:2_lower}
\left\|v\left(z,\frac12\right)\right\|^2&\gg& \frac{\Gamma(k-1/2)^2}{(4\pi)^k\Gamma(k)}\sum_{\f\in\Pi_k(\ringO_{F})}\left|L\left(\frac12, \f\otimes\g\right)\right|^2\res_{s=1}L(s,\f\otimes\f)^{-1} \nonumber \\
&\gg& \frac{\Gamma(k-1/2)^2}{(4\pi)^k\Gamma(k)}\sum_\f\left|L\left(\frac12,\f\otimes\g\right)\right|^2(\log k)^{-c_1}.
\end{eqnarray}
Applying \eqref{eqn:2_upper}, \eqref{eqn:2_lower}, and Stirling's formula yields
\begin{eqnarray*}
\sum_{\f\in\Pi_k(\ringO_F)}\left|L\left(\frac12,\f\otimes\g\right)\right|^2&\ll& 
 \frac{(\log k)^{2c_1}\Gamma(k)\Gamma(k+1/\log k)}{\Gamma(k-1/2)^2}\\&\ll& k(\log k)^{2c_1}
\end{eqnarray*}
as desired. 

\section*{Acknowledgements} The authors would like to thank Professors Amir Akbary, Ram Murty, and Nathan Ng for encouraging comments and useful discussions about the topic of this paper.

\bibliographystyle{siam}
\bibliography{references}

\end{document}